\documentclass[a4paper]{amsart}

\usepackage{url}
\usepackage{amscd}
\usepackage{amsmath,amsthm,amsxtra,amssymb}
\usepackage{graphics}
\usepackage[dvips]{graphicx}
\usepackage{latexsym}
\usepackage{amsfonts}

\def\R{\mathbb{R}}
\def\C{\mathbb{C}}
\def\Z{\mathbb{Z}}
\def\Q{\mathbb{Q}}

\def\BSO{\mathrm{BSO}}
\def\BSPIN{\mathrm{BSpin}}
\def\BO{\mathrm{BO}}
\def\BPIN{\mathrm{BPin}}

\def\pn{\mathfrak{p}}

\def\vep{\varepsilon}

\def\vep{\varepsilon}

\def\span{\mathrm{span}}
\newcommand{\rank}{\mathop{\mathrm{rank}}\nolimits}



\newcommand{\bbbone}{{\leavevmode\hbox{\small 1\kern-3.3pt\normalsize 1}}}
\newcommand{\bbbzero}{{\leavevmode\hbox{\scriptsize $|$
\kern-3.3pt \normalsize 0\kern-3.3pt
\small $|$}}}

\def\spmapright#1{\smash{%
 \mathop{\hbox to 1.3cm{\rightarrowfill}}
  \limits^{#1}}}
\newcommand{\mapup}[1]{%
  \smash{\mathop{%
   \hbox to 1.5cm{\rightarrowfill}}\limits^{#1}}}
\newcommand{\mapdown}[1]{%
  \smash{\mathop{%
   \hbox to 1.5cm{\rightarrowfill}}\limits_{#1}}}

\theoremstyle{plain}
\newtheorem{theorem}{\textbf{Theorem}}[section]
\newtheorem{corollary}[theorem]{\textbf{Corollary}}

\newtheorem{lemma}[theorem]{\textbf{Lemma}}

\theoremstyle{remark}
\newtheorem{definition}[theorem]{{\rm Definition}}
\newtheorem{example}[theorem]{{\rm Example}}
\newtheorem{remark}[theorem]{{\rm Remark}}

\numberwithin{equation}{section}

\renewcommand{\theequation}%
    {\thesection.\arabic{equation}}


\begin{document}

\date{\today} 
\title{Obstructions to the existence of fold maps}
\author{Rustam Sadykov}
\thanks{The first author has been supported by the
FY2005 Postdoctoral Fellowship for Foreign Researchers, Japan Society for the Promotion of Science; and a 
Postdoctoral Fellowship of Max Planck Institute, Germany}
\address{Department of Mathematics, University of Toronto, Toronto, Ontario,  M5S 2E4, Canada}
\email{rstsdk@gmail.com}
\author{Osamu Saeki}
\thanks{The second
author has been
supported in part by Grant-in-Aid for Scientific Research
(No.~19340018), Japan Society for the Promotion of Science.}
\address{Faculty of Mathematics, Kyushu University, 
Motooka 744, Nishi-ku, Fukuoka 819-0395, Japan}
\email{saeki@math.kyushu-u.ac.jp}
\author{Kazuhiro Sakuma}
\thanks{The third author has been
supported in part by Grant-in-Aid for Scientific Research
(No.~18540102), Japan Society for the Promotion of Science.}
\address{Department of Mathematics, Kinki University, 
Osaka 577-8502, Japan}
\email{sakuma@math.kindai.ac.jp}

\dedicatory{Dedicated to Professor Yoshifumi Ando
on his sixtieth birthday}

\subjclass[2000]{Primary 57R45; Secondary 57R25, 55S45}

\keywords{Fold map, obstruction, characteristic class,
stable span, $h$-principle}

\begin{abstract}
We study smooth maps between smooth manifolds
with only fold points as their singularities,
and clarify the obstructions to the existence
of such a map in a given homotopy class
for certain dimensions. The obstructions
are described in terms of characteristic
classes, which arise as Postnikov invariants,
and can be interpreted as primary and secondary
obstructions to the elimination
of certain singularities. We also discuss
the relationship between the existence
problem of fold maps and that of vector
fields of stabilized tangent bundles.
%
\end{abstract}

\maketitle

\section{Introduction}\label{intro}

In 1970 Mather posed the
following question (see \cite{Mather6.5}):
does any element of the
homotopy group $\pi_n(S^p)$, $n \geq p$, contain a fold
map $S^n \to S^p$?
Here, a \emph{fold map} is a smooth map with only 
fold singularities, which
are, in a sense, the simplest
among all generic singularities. Thus the fold maps 
form a reasonable class of maps which
is very close to that of submersions.
The problem was affirmatively
solved by Eliashberg in \cite{Elias1, Elias2}, who
solved it by establishing the $h$-principle
for fold maps (see \cite{EM4, Gromov}) on the $1$-jet level.

In \cite{SS4} the second and the third authors considered a
similar problem for maps between $4$-manifolds and showed
that for a closed orientable $4$-manifold $M$,
the homotopy class of a map $M \to S^4$ contains a
smooth map with only fold and cusp
singularities if and only if
the first Pontrjagin class $p_1(M)$ vanishes.
According to Eliashberg \cite{Elias1}, 
the homotopy class of a map
$M \to S^4$ contains a fold map if and only if
both $p_1(M)$ and the second Stiefel--Whitney
class $w_2(M)$ vanish.
Note that $w_2(M)$ coincides with the Poincar\'e
dual to the $\Z_2$-homology class represented
by the closure of the set of cusp points
of a given generic map $M \to S^4$:
in other words, $w_2$ is 
the so-called {\it Thom polynomial} for cusp singularities.
In a similar fashion, $p_1$ is the Thom polynomial
for the so-called $\Sigma^{2, 0}$ singularities
(see \S\ref{secondary} of the present paper).
Thus we can conclude that in the case of a generic map 
$M\to S^4$ of a closed \emph{orientable} $4$-manifold $M$, the Thom
polynomials are the unique obstructions to the elimination
of singularities except for the fold points (and cusp
points).
In contrast, we will see that an analogous result
does not hold for closed \emph{non-orientable} $4$-manifolds
(see Corollary~\ref{th:3} (ii)). 
In other words we have obstructions other than Thom polynomials. 

We are interested in the following general problem
(see \cite{AVGL}):
given a generic
smooth map $g: M \to N$ between smooth manifolds, under what conditions
does there exist a generic
smooth map homotopic to $g$ which has no singularities
of a prescribed type $\Sigma$?
Obstructions to the elimination of singularities
are, for example, characteristic classes (see \cite{Thom2}), 
homotopy invariants
such as the Hopf invariant (see \cite{SS2}), or
smooth structures of manifolds (see \cite{SS3, SS6}).
In many cases, the $h$-principle holds and
the problem is equivalent to
the existence problem of a corresponding jet section $M \to J^k(M,N)$
covering $g$ and avoiding the singular jets of type
$\Sigma$. Then the primary obstruction to the existence is the
Thom polynomial, which is the Poincar\'e
dual to the homology class represented by the closure
of the singular point set $\Sigma(g)$ of $g$ with type $\Sigma$.
The Thom polynomial does not always
tell us a complete answer to the problem,
since the topological
location of $\Sigma(g)$ in the source manifold $M$ can be nontrivial
even if the Thom polynomial of $\Sigma$ vanishes.
This means that there may be other (co)homological obstructions
to eliminating the singularities of a prescribed type by 
homotopy. 

It is the purpose of the present paper to study the so-called {\it higher order obstructions} which arise as well-defined obstructions in those
cases where the primary obstructions, i.e., Thom polynomials, fail to determine the existence of maps with prescribed singularities.

As far as the authors know, 
there are only a few results about
higher order obstructions to the elimination of singularities.
For example, the first and the second authors
clarified such a secondary obstruction
to eliminating cusp singularities for maps
of closed orientable $4$-manifolds into $3$-manifolds
(\cite{Sadykov2, Sae4}),
and Sz\H{u}cs \cite{Sz} discussed this problem from
a viewpoint of cobordism of maps.

In the present paper we obtain a series of results on
higher order obstructions. For example, we compute
the complete set of obstructions to the existence of
{\it tame fold maps} of non-orientable $4$-manifolds into $\R^3$, i.e., fold maps whose restriction to the set of singular points is an
immersion with trivial normal bundle. Namely we show (see Theorem~\ref{thm:43non-ori}) that for a closed
connected non-orientable $4$-manifold $M$, there exists a tame fold map
$f : M \to \R^3$ if and only if
$W_3(M) = 0$ in $H^3(M; \Z_{w_1(M)})$
and $w_4(M) = 0$ in $H^4(M; \Z_2)$, where
$\Z_{w_1(M)}$ denotes the orientation local
system of $M$ and $W_3$ denotes the $3$rd
Whitney class for twisted coefficients.

%

The paper is organized as follows.
In \S\ref{Andohp}, we recall
the $h$-principle results due to
Eliashberg and Ando for fold maps
and discuss the relationship between the
existence problem of fold maps
and that of vector fields of stabilized
tangent bundles. In \S\ref{secondary},
we give a theorem (Theorem~\ref{th:2}) about the existence
of nowhere linearly dependent sections for 
vector bundles over $4$-dimensional CW
complexes, and deduce some corollaries
about the existence of fold maps.
We prove Theorem~\ref{th:2} using
results by Dold--Whitney \cite{DW},
and interpret our result in terms of
Postnikov decompositions
and their invariants. 
We also study fold maps between
equidimensional manifolds for
low dimensions, using a similar argument.
Furthermore, we interpret
these results about the existence of fold maps
from the viewpoint of elimination
of singularities.
In \S\ref{even4}, we study fold maps
of higher dimensional manifolds into
$\R^4$, using known results about
vector fields. 
In \S\ref{43}, we study tame fold
maps of non-orientable $4$-manifolds into
$3$-manifolds, using
the Postnikov decomposition argument. 
It turned out that the existence problem of 
fold maps is related to characteristic classes
of pin vector bundles. In Appendix, we compute the characteristic
classes in degree $4$ and establish their 
relationship to Pontrjagin and Stiefel-Whitney
classes. These are used to interpret 
our results in terms of Postnikov decompositions. 
The content of the Appendix might be folklore. Nevertheless, 
we included it in our paper, since we could not find 
the assertions in the literature.

Throughout the paper, manifolds
are smooth of class $C^\infty$.
The symbol $\vep^\ell$ denotes the
trivial $\ell$-plane bundle over an appropriate
space (when $\ell = 1$, $\vep$ is also used
in place of $\vep^1$).

The authors would like to thank Boldizs\'ar Kalm\'ar for indicating his recent results on the existence of fold maps to them; and the referee for comments that lead to an improvement of the paper. 

\section{Ando's $h$-principle theorem for fold maps}
\label{Andohp}

Let $f: M \to N$ be a smooth map between manifolds with
$n=\dim M\ge \dim N=p$.
We denote by $S(f)$ the set of singular points of $f$, i.e.\ 
the set of all points $x \in M$ such that $\rank df_x < p$.
A singular point $x \in S(f)$ of $f$ is of \emph{fold type}
if $f$ can be written in a form
$$(x_1, x_2, \ldots, x_n) \mapsto (x_1, x_2, \ldots, x_{p-1},
\pm x_p^2 \pm x_{p+1}^2 \pm 
\cdots \pm x_n^2)$$ 
for some local coordinates around $x$ and $f(x)$.
We say that $f: M \to N$ is a \emph{fold map} 
if all of its singular points are of
fold type.
Note that for $p=1$ a singular point is of fold type
if and only if it is a nondegenerate critical point,
and hence a fold map into $\R$
is nothing but a Morse function.

Fold maps can be characterized in terms of jets as follows. By definition, the \emph{$1$-jet bundle} 
\[
   J^1(M,N)=\mathop{\mathrm{Hom}(TM,TN)} \longrightarrow M
\]
is a fiber bundle whose fiber over a point $x\in M$ consists of all pairs $(y, h)$ of points $y\in N$ and linear maps $T_xM\to T_yN$ of tangent planes. We note that a smooth map $f: M\to N$ determines a jet $j^1_xf$ at each point $x\in M$ with $y=f(x)$ and $h=df_x$. A point of the $1$-jet space $J^1(M,N)$ is called a \emph{$1$-jet}. Let $\Sigma^r$ denote the submanifold of the $1$-jet space $J^1(M,N)$
consisting of the $1$-jets with corank $r$,
where the \emph{corank} of a jet $j^1_xg$ means the
rank of the kernel of the differential $dg_x$.
Then a smooth map $f: M \to N$ is a fold map if and only if
its $1$-jet exten\-sion $j^1f: M \to J^1(M,N)$ is transverse to 
$\Sigma^1$, $j^1f(M) \cap \Sigma^r = \emptyset$, 
$r \geq 2$, and
$f|_{(j^1f)^{-1}(\Sigma^1)}$ is an immersion
(see \cite[Chap.~III, \S 4]{GG} for more details).
Note that $(j^1f)^{-1}(\Sigma^1) = S(f)$ is a closed regular
submanifold of $M$ of dimension $p-1$.
Singularities of fold type are the simplest, i.e.\ they have
the smallest codimension, among all generic corank one
singularities.

As has been mentioned in \S1,
Eliashberg \cite{Elias1,Elias2} studied 
the existence problem of fold maps and 
obtained the $h$-principle of fold maps, which allows us to replace the existence problem of fold maps by an algebraic topology problem. Namely, a fold map $f:M\to N$ exists if and only if there is a section $s: M\to J^1(M,N)$ transversal to $\Sigma^1$ such that 
\begin{itemize}
\item $s(M) \cap \Sigma^r = \emptyset$ for each 
$r \geq 2$, 
\item in a neighborhood $U$ of each point $x$ of $s^{-1}(\Sigma^1)$ there is a fold map $f_x$ such that $s|U=j^1f_x|U$, and  
\item for each index $i=0,1,..., (n-p+1)/2$, there is a point $x\in s^{-1}(\Sigma^1)$ such that $x$ is a fold singular point of $f_x$ of index $i$.  
\end{itemize}
The Eliashberg $h$-principle implies that 
if $M$ is stably parallelizable, then every map
$g : M \to S^p$ (or $g : M \to \R^p$), 
$p \leq \dim M$, is homotopic to
a fold map, which gives a complete
solution to the original problem of Mather mentioned
in \S\ref{intro}.

In general, the stable parallelizability is not a necessary
condition for the existence of fold maps into
$\R^p$.
According to the Thom--Levine theorem
\cite{Lev2, Thom2},
there exists a fold map $f : M \to \R^2$ 
of a closed connected manifold $M$ with
$\dim{M} \geq 2$ if and only if
the Euler characteristic of $M$ is even.
Thus the existence problem of fold maps into $\R^2$
has been completely solved.
For fold maps into $\R^3$, 
the first and the second authors independently
determined necessary and sufficient
conditions for the existence
of such a fold map on a closed oriented $4$-manifold 
\cite{Sadykov2, Sae4}, and when $\dim{M} \ge 5$
the first and the third authors
recently solved the problem except for a few cases
(see \cite{Sadykov4} when $\dim{M}$ is even,
and \cite{Sak3} when $\dim{M}$ is odd).

In this paper we mainly study the existence problem 
of fold maps of even dimensional manifolds into
$\R^4$, which, by Theorem~\ref{Ando} below, 
is equivalent to the existence problem of fold maps in a given 
homotopy class of maps of an even dimensional manifold into $S^4$.   

In the following, we say that a fold map
$f : M \to N$ is \emph{tame}
if the normal bundle of the immersion
$f|_{S(f)}$ is orientable.
Note that if $\dim{M} - \dim{N}$ is even, then
every fold map is tame (for example, see \cite{Sae1}).
In \cite{Sae1} the second author proved that if there is a tame
fold map $f:M\to N$, then there exists a fiberwise epimorphism
$TM\oplus\varepsilon^1\to f^*TN$. On the other hand, using the Eliashberg $h$-principle \cite{Elias1,Elias2} in an essential way, Ando showed that the converse holds true as well. 

\begin{theorem}[Ando \cite{Ando6}]\label{Ando}
Let $g: M \to N$ be a continuous map
between smooth manifolds with $n = \dim M \geq \dim N = p$.
Then there exists a tame fold map $f: M \to N$ homotopic
to $g$ if and only if
there exists a fiberwise epimorphism $TM \oplus \vep^1 \to g^*TN$.
\end{theorem}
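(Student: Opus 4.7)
The statement splits into an easy and a hard direction. Necessity is essentially \cite{Sae1}, and I would dispatch it as follows. Given a tame fold map $f \simeq g$, the differential $df : TM \to f^*TN$ is surjective off $S(f)$, and along $S(f)$ its cokernel coincides with the normal line bundle $\nu$ of the immersion $f|_{S(f)}$, which is trivial by tameness. I pick a trivializing section of $\nu$, extend it to a section $\sigma$ of $f^*TN$ on a tubular neighborhood of $S(f)$, and damp it off by a bump function. Then $\Phi := (df, \sigma) : TM \oplus \vep^1 \to f^*TN$ is a fiberwise epimorphism, and identifying $f^*TN \cong g^*TN$ via the homotopy $f \simeq g$ gives the required $\Phi$ over $g$.

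For sufficiency the plan is to reduce to Eliashberg's $h$-principle quoted earlier in this section. From an epimorphism $\Phi : TM \oplus \vep^1 \to g^*TN$, I define a candidate section $s_0 : M \to J^1(M,N)$ covering $g$ by $s_0(x) := \Phi_x|_{T_xM}$. Because $\Phi_x$ is surjective, adjoining a single $\vep^1$ summand can drop the cokernel dimension by at most one, so $s_0$ has corank at most $1$ pointwise; in particular $s_0(M) \cap \Sigma^r = \emptyset$ for every $r \geq 2$, a property preserved under a $C^\infty$-small perturbation. I would then perturb $s_0$ to a homotopic section $s_1$ transverse to $\Sigma^1$, producing a closed singular locus $S := s_1^{-1}(\Sigma^1) \subset M$ of dimension $p-1$.

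Two things remain in order to apply Eliashberg's criterion: (a) make $s_1$ holonomic near $S$ with the germ of a fold map, and (b) realize each fold index $i \in \{0, 1, \ldots, \lfloor (n-p+1)/2 \rfloor\}$ along $S$. For (a) the key observation is that on $S$ the bundles $K := \Ker(s_1) \subset TM|_S$ and $Q := g^*TN|_S / \Image(s_1)$ are line bundles, and a fold germ at $x \in S$ is encoded by a nondegenerate quadratic form $K \otimes K \to Q$. The $\vep^1$-summand of $\Phi$ maps isomorphically onto $Q$ along $S$, yielding the trivialization of $Q$ that is simultaneously the tameness data and the freedom needed to prescribe an arbitrary index. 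I would then upgrade $s_1$ to a holonomic section near $S$ by a local patching argument. For (b), any missing index can be inserted by a local modification of $g$ inside a small coordinate ball in $M$, chosen compatibly with $\Phi$; this changes neither the homotopy class of $g$ nor the existence of the fiberwise epimorphism. Eliashberg's $h$-principle then delivers the desired tame fold map $f \simeq g$.

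The main obstacle is step (a), the promotion of the formal section $s_1$ to an honestly holonomic one in a neighborhood of $S$. This is a fold $h$-principle with parameters along the $(p-1)$-dimensional singular locus, and constitutes the technical heart of Ando's refinement of Eliashberg's theorem in \cite{Ando6}; in particular, it is where the trivialization of $Q$ provided by the $\vep^1$-summand plays a decisive role. The transversality step and the index-realization argument (b) are routine once the bookkeeping is set up, but Eliashberg's index hypothesis must be explicitly verified before his theorem applies.
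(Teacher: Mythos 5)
The first thing to say is that the paper does not prove this statement at all: it is imported wholesale, with the ``only if'' direction attributed to \cite{Sae1} and the ``if'' direction to Ando \cite{Ando6}, who establishes it ``using the Eliashberg $h$-principle in an essential way.'' So there is no in-paper argument to measure you against. Judged on its own terms, your necessity argument is correct and is exactly the argument of \cite{Sae1}: at a fold point the cokernel of $df$ is canonically the normal line bundle of the immersion $f|_{S(f)}$, tameness makes that line bundle orientable and hence trivial, and a cut-off spanning section $\sigma$ makes $(df,\sigma)\co TM\oplus\vep^1\to f^*TN$ surjective everywhere; the identification $f^*TN\cong g^*TN$ along the homotopy is harmless. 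This direction is complete.

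The sufficiency direction, however, is not a proof. Two points. First, a slip: at a point of $S=s_1^{-1}(\Sigma^1)$ the cokernel $Q$ is indeed a line bundle, but the kernel $K=\Ker(s_1)$ has dimension $n-p+1$, so it is a line bundle only in the equidimensional case; the intrinsic data of a fold germ is a nondegenerate $Q$-valued quadratic form on this $(n-p+1)$-plane bundle, not on a line bundle. This is repairable (a definite form always exists once $Q$ is trivialized, which is where your $\vep^1$-summand correctly enters and where tameness of the output is secured), but as written the bookkeeping for step (a) is wrong for $n>p$. Second, and decisively, step (a) --- upgrading the formal, transverse, corank-$\le 1$ section $s_1$ to one that is \emph{holonomic} near $S$ with prescribed fold germs --- is precisely the content of Ando's theorem built on Eliashberg's surgery of singularities \cite{Elias1, Elias2}; you state explicitly that you are deferring it to \cite{Ando6}. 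That makes the argument circular as a proof of the theorem: everything before and after (the definition of $s_0$, the corank estimate, transversality, and the insertion of missing indices by local modification to meet Eliashberg's index hypothesis) is routine and correct, but the one step you do not carry out is the theorem. Since the paper also simply cites this step, your write-up is an honest and essentially accurate account of how the result is organized in the literature, but it should be presented as such rather than as a proof.
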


This suggests a close relationship between the existence problem of
fold maps and that of vector fields.
In order to clarify the relationship,
let us recall the following definition.

\begin{definition}
Let $\xi$ be a vector bundle over a CW complex.
The maximum number of nowhere linearly dependent sections 
of $\xi$ is called the 
\emph{span} of $\xi$ and is denoted
by $\span(\xi)$. If $\xi$ is the tangent bundle $TM$ of a manifold 
$M$, then the span of $\xi$ is also called the \emph{span} of $M$
and is denoted by $\span(M)$. 
The \emph{stable span} of a vector bundle $\xi$,
denoted by $\span^0(\xi)$,
is the limit of the non-negative non-decreasing
sequence $\{s_n\}$, 
where $s_n+n$ is the span of $\xi \oplus \varepsilon^n$ 
for each $n \ge 0$. Similarly, if $\xi$ is the tangent bundle $TM$
of a manifold $M$, then the \emph{stable span} of $M$ can be 
defined as the number $s$ such that $s+1$ is the span of 
$\xi \oplus \varepsilon$, and is denoted by $\span^0(M)$
(see \cite{KZ, Kos} for more details).
\end{definition}

Let $\nu_N$ be the normal bundle of an embedding of $N$ into $\mathbb{R}^{m}$ for some sufficiently big positive integer $m$. 

\begin{lemma} A fiberwise epimorphism $TM\oplus \varepsilon^1 \to g^*TN$ exists if and only if $\span^0(TM\oplus g^*\nu_N) \ge p + \mathop{\mathrm{dim}}\nu_N -1$. 
\end{lemma}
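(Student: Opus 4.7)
The plan is to exploit the triviality $g^*TN \oplus g^*\nu_N \cong \vep^m$ (where $m = p + \dim \nu_N$, coming from the fixed embedding $N \hookrightarrow \R^m$) together with the standard cancellation principle for vector bundles over an $n$-dimensional CW complex: bundles whose rank strictly exceeds $n = \dim M$ are determined by their stable isomorphism class, so a common summand may be cancelled whenever the leftover bundles have rank $> n$.

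For the ``only if'' direction I would proceed as follows. Choose a fiberwise metric on $TM \oplus \vep^1$ so that a fiberwise epimorphism $\varphi \co TM \oplus \vep^1 \to g^*TN$ induces a splitting $TM \oplus \vep^1 \cong \Ker\varphi \oplus g^*TN$. Taking direct sum with $g^*\nu_N$ on both sides gives
$$
TM \oplus g^*\nu_N \oplus \vep^1 \;\cong\; \Ker\varphi \oplus \vep^m.
$$
Hence the left hand side admits $m$ pointwise linearly independent sections, so $\span(TM \oplus g^*\nu_N \oplus \vep^1) \geq m$; since the sequence defining $\span^0$ is non-decreasing, this forces $\span^0(TM \oplus g^*\nu_N) \geq m - 1 = p + \dim \nu_N - 1$.

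For the ``if'' direction, suppose $\span^0(TM \oplus g^*\nu_N) \geq m - 1$. Then for some sufficiently large $k$ there is an isomorphism
$$
TM \oplus g^*\nu_N \oplus \vep^k \;\cong\; \vep^{m-1+k} \oplus \eta
$$
with $\rank \eta = n + 1 - p$. I would rewrite the right hand side as $\vep^m \oplus \eta \oplus \vep^{k-1}$ and cancel $\vep^{k-1}$; this is legitimate because the remaining bundles have rank $n+m-p+1 > n$. The outcome is
$$
TM \oplus g^*\nu_N \oplus \vep^1 \;\cong\; \vep^m \oplus \eta.
$$
Substituting $\vep^m \cong g^*TN \oplus g^*\nu_N$ on the right and cancelling the common summand $g^*\nu_N$ yields $TM \oplus \vep^1 \cong g^*TN \oplus \eta$, and the projection onto $g^*TN$ is the desired fiberwise epimorphism.

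The most delicate point is the cancellation of the non-trivial summand $g^*\nu_N$ in the last step. I would reduce it to cancellation of trivial bundles by picking a bundle $\gamma$ with $g^*\nu_N \oplus \gamma$ trivial, adding $\gamma$ to both sides, and then invoking the fact that, over the $n$-dimensional base $M$, stable isomorphism of bundles of rank $n+1 > n$ implies genuine isomorphism. This is where the dimensional hypothesis on the base enters and is the only place where the argument is more than bookkeeping.
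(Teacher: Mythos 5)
Your proof is correct, and its ``only if'' half is essentially identical to the paper's: extend the epimorphism by $g^*\nu_N$ to land in $g^*TN\oplus g^*\nu_N\cong\vep^m$ and read off a splitting $TM\oplus g^*\nu_N\oplus\vep^1\cong\Ker\varphi\oplus\vep^m$. The converse is where you take a genuinely different (though closely related) route. The paper first converts the stable frame into a fiberwise epimorphism $TM\oplus\vep^u\to g^*TN\oplus\vep^{u-1}$ and then destabilizes that epimorphism directly, asserting that ``by dimensional reasoning'' it is homotopic to the direct sum of the desired epimorphism with the identity on $\vep^{u-1}$ --- a statement about sections of a bundle whose fibers are spaces of surjections, homotopy equivalent to Stiefel manifolds. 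You instead stay entirely within bundle isomorphisms and apply the stable-range cancellation theorem (bundles of rank $>\dim M$ over $M$ are determined by their stable class) twice: once to strip off $\vep^{k-1}$ and once, via an inverse bundle $\gamma$, to strip off the non-trivial summand $g^*\nu_N$, arriving at $TM\oplus\vep^1\cong g^*TN\oplus\eta$ and taking the projection. Both arguments ultimately rest on the same connectivity of Stiefel manifolds; yours is somewhat more self-contained, reducing everything to the standard classification of vector bundles over an $n$-complex, whereas the paper's is shorter but leaves the destabilization of the epimorphism as an unexpanded step. Your identification and treatment of the delicate point --- the cancellation of $g^*\nu_N$ --- is exactly right.
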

\begin{proof} A fiberwise epimorphism $TM\oplus \varepsilon^1 \to g^*TN$
extends to a fiberwise epimorphism 
\[
     TM\oplus \varepsilon^1 \oplus g^*\nu_N \longrightarrow g^*TN\oplus g^*\nu_N \cong\vep^m
\]
over $M$, which implies $TM \oplus \varepsilon^1 \oplus
g^* \nu_N \cong \eta \oplus \varepsilon^m$, where $\eta$ is the kernel bundle of the above fiberwise epimorphism. Then, the desired inequality follows, since $m=p+\mathop{\mathrm{dim}}\nu_N$. 
On the other hand, if 
\[
   \span^0(TM\oplus g^*\nu_N) \ge p + \mathop{\mathrm{dim}}\nu_N -1, 
\]
then there is a fiberwise epimorphism 
\[
   TM\oplus \varepsilon^u \longrightarrow g^*TN\oplus \varepsilon^{u-1}
\]
for some positive integer $u$. By dimensional reasoning, such a fiberwise epimorphism is homotopic to the direct sum of a desired fiberwise epimorphism and the fiberwise identity map $\varepsilon^{u-1}\to \varepsilon^{u-1}$. 
\end{proof}


In particular, we have the following:

\begin{corollary}\label{span}
Let $M$ be a smooth $n$-dimensional manifold.
For $p$ with $n \geq p \geq 1$, if
$\span^0(M)\ge p-1$,
then there exists a tame fold map $f: M \to \R^p$.
When $n-p$ is even, $\span^0(M) \ge p-1$ if and only if
there exists a fold map $f: M \to \R^p$.
\end{corollary}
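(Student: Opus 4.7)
The plan is to combine Ando's $h$-principle (Theorem~\ref{Ando}) with the preceding Lemma, specialized to $N=\R^p$, where everything trivializes and the stable span of $M$ controls the hypothesis of the Lemma.

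For the first assertion, I would pick any continuous $g\co M\to\R^p$ (say, a constant map) and embed $\R^p\hookrightarrow\R^m$ linearly, so that the normal bundle $\nu_{\R^p}$ is trivial of rank $m-p$. Then $g^*T\R^p\cong\vep^p$ and $g^*\nu_{\R^p}\cong\vep^{m-p}$. The hypothesis $\span^0(M)\ge p-1$ asserts, by definition, that $TM\oplus\vep$ admits $p$ everywhere linearly independent sections, so $TM\oplus\vep\cong\eta\oplus\vep^p$ for some vector bundle $\eta$. Adding $\vep^{m-p-1}$ to both sides yields $TM\oplus g^*\nu_{\R^p}\cong\eta\oplus\vep^{m-1}$, and hence $\span^0(TM\oplus g^*\nu_{\R^p})\ge m-1=p+\dim\nu_{\R^p}-1$. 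The Lemma then produces a fiberwise epimorphism $TM\oplus\vep\to g^*T\R^p$, and Theorem~\ref{Ando} upgrades this to a tame fold map $M\to\R^p$ homotopic to $g$.

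For the equivalence when $n-p$ is even, I would invoke the fact (noted earlier in the paper) that in this parity every fold map is automatically tame, so the existence of a fold map and of a tame fold map into $\R^p$ coincide; the ``if'' direction is then the first assertion. For the ``only if'' direction, given a fold map $f\co M\to\R^p$, its tameness together with Ando's theorem gives a fiberwise epimorphism $TM\oplus\vep\to\vep^p$, and the Lemma then yields $\span^0(TM\oplus\vep^{m-p})\ge m-1$. The elementary identity $\span^0(\xi\oplus\vep^k)=\span^0(\xi)+k$ (which follows directly from the definition of stable span via the sequence $\{s_n\}$) reduces this to $\span^0(M)\ge p-1$.

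No serious difficulty is anticipated: the statement is essentially a formal repackaging of the $h$-principle together with the Lemma. The only points requiring attention are to reconcile the paper's bespoke definition of $\span^0(M)$ in terms of $\span(TM\oplus\vep)$ with the additivity of stable span under trivial summands used in the converse, and to confirm that the bookkeeping in the embedding dimension $m$ cancels cleanly from both sides, so the conclusion is independent of the auxiliary choice of $m$.
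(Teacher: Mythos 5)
Your proposal is correct and follows exactly the route the paper intends: the corollary is stated as an immediate consequence of Theorem~\ref{Ando} and the preceding Lemma specialized to $N=\R^p$ (where $\nu_N$ and $g^*T\R^p$ are trivial), together with the earlier remark that every fold map is tame when $n-p$ is even. The two points you flag at the end (additivity of $\span^0$ under trivial summands and independence of $m$) do check out against the paper's definition, since $TM\oplus\vep$ has rank exceeding $\dim M$ and is therefore determined by its stable class.
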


Moreover, we have a homotopy theoretical
interpretation of the stable span as follows.
For an $(n+\ell)$-plane bundle $\xi$
with $\ell \geq 0$ over a CW complex $W$
of dimension $n$, let $c_\xi : W
\to \mathrm{BO}_{n+\ell+1}$ be the classifying
map of $\xi \oplus \varepsilon$.
Consider the natural
fibration (see \cite{Whitehead}):
$$
  V_{p+\ell, n+\ell+1} \longrightarrow \mathrm{BO}_{n-p+1}
  \stackrel{\pi}{\longrightarrow} \mathrm{BO}_{n+\ell+1},
$$
where $V_{m, k}$ is the Stiefel manifold consisting
of all orthonormal $m$-frames in $\R^k$.
Then $\span^0(\xi) \geq p+\ell-1$ if and only if
there exists a continuous
map $\tilde{c}: W \to \mathrm{BO}_{n-p+1}$ such that
the diagram
$$
\setlength{\unitlength}{1mm}
\begin{picture}(80,30)
\put(2,0){\makebox(10,10)[c]{$W$}}
\put(50,22){\makebox(10,10)[c]{$\mathrm{BO}_{n-p+1}$}}
\put(50,0){\makebox(10,10){$\mathrm{BO}_{n+\ell+1}$}}
\put(15,5){\vector(1,0){28}}
\put(53,23){\vector(0,-1){14}}
\put(15,9){\vector(2,1){30}}
\put(55,15){\footnotesize $\pi$}
\put(30,21){\footnotesize $\tilde{c}$}
\put(30,7){\footnotesize $c_\xi$}
\end{picture}
$$
is commutative.

We can study the existence of such a lift 
$\tilde{c}$ of $c_\xi$ by using the
Postnikov decomposition argument. 

In general, given a continuous map $f: E\to B$ of path connected topological spaces with homotopy fiber $F$, the \emph{Postnikov decomposition} or \emph{Moore-Postnikov tower} of $f$ is the decomposition of $f$ of the form 
\[
    E \longrightarrow \cdots \longrightarrow E_2 \longrightarrow E_1 \longrightarrow E_0=B, 
\]
where the composition $E\to B$ is homotopic to $f$; each  map $E_{n+1}\to E_{n}$ is a fibration with fiber $K(\pi_n(F), n)$; each composition $E_n\to B$ induces an isomorphism of homotopy groups in dimensions $>n$, and an injection in dimension $n$; and each composition $E\to E_n$ induces an isomorphism of homotopy 
groups in dimensions $<n$ and a surjection in dimension $n$. 

If $f: E\to B$ is a fibration of CW complexes and $\pi_1(B)$ acts trivially on the homology groups of the fiber, then $f$ admits a Postnikov decomposition where each fibration $E_{n+1}\to E_n$ is principal. 

Now, to lift a continuous map $X\to B$ to a map $X\to E$, we may split the problem into those of constructing consecutive lifts of $X\to B$ to $X\to E_1$, $X\to E_2$ and so on. If $X$ is a finite dimensional CW complex, then the lifts $X\to E$ are in bijective correspondence with the lifts $X\to E_n$ for sufficiently big positive integer $n$. On the other hand, the only obstruction to the existence of a lift of $X\to E_{n-1}$ to $X\to E_n$ is an algebraic invariant, called the $n$-th Postnikov invariant. For details we refer the reader to \cite{T0} and the sections ``Postnikov towers" and ``Obstruction theory" in \cite{Hatcher}.


\begin{remark}\label{rem:s1}
As remarked in \cite{Kos}, we have the
following formula:
$$\mathrm{span}(M \times S^1) =
\mathrm{span}^0(M \times S^1) =
\mathrm{span}^0(M) + 1.$$
This formula is useful for applying known
results about the span of manifolds to
the study of the stable span of manifolds.
\end{remark}

\begin{remark}
According to the theory of wrinkled maps
of Eliashberg--Mishachev \cite{EM1},
a closed connected manifold $M$ with $\dim{M} \geq p$ admits
a wrinkled map into $\R^p$ if and only
if $\span(M) \geq p$. Therefore,
by Corollary~\ref{span}, if $M$ admits a
wrinkled map into $\R^p$, then it
admits a tame fold map into $\R^{p+1}$.
We do not know if we can prove this fact
by a direct geometric construction.
\end{remark}

\begin{remark}
About the
existence of fold maps, some results
have already been obtained, for example,
in \cite{Kalmar, OSS, Sae1}.
\end{remark}

\section{Fold maps between equidimensional manifolds}\label{secondary}

Let us first consider the existence problem of
fold maps between $4$-manifolds.
For this purpose, in view of
the discussion in the previous section, we have only to
find obstructions to the existence of a $(k-1)$-frame\footnote{An
\emph{$\ell$-frame} of a vector bundle means a set of
$\ell$ nowhere linearly dependent sections.}
for an arbitrary
$k$-plane bundle over a CW complex of dimension
$4$ with $k \geq 5$.

In the following, 
for a CW complex $W$, we will denote its $i$-skeleton by
$W^{[i]}$.
Let $\xi$ be a possibly non-orientable
$k$-plane bundle over a CW complex $W$.
A \emph{pin structure}\footnote{This 
corresponds to a $\mathrm{Pin}^+$-structure in
the literature
(for example, see \cite{KT}).}
on $\xi$ is the homotopy class of a $(k-1)$-frame of
$\xi$ over $W^{[1]}$ admitting an extension 
over $W^{[2]}$. When a specific pin structure,
say $\pn$, is given, we call $\xi$
a \emph{pin vector bundle} and denote it by $(\xi, \pn)$.
Let $\BPIN$ denote the 
direct limit of the classifying spaces $\BPIN_k$ of
pin $k$-plane bundles. Note that the cohomology
classes of $\BPIN$ are considered to be characteristic
classes of pin vector bundles.
It is known that there is a class 
$z \in H^4(\BPIN; \Z)$ satisfying
\begin{equation}
2z=p_1 \quad \mbox{\rm and} \quad z \equiv w_4 \pmod{2},
\label{eq:zw}
\end{equation}
where $p_1$ is the first Pontrjagin class and
$w_4$ is the 4th Stiefel--Whitney class (see Appendix).
Since 
$H^3(\BPIN; \Z_2) \cong \Z_2$ (e.g., see the calculation of 
$H^*(\BSPIN; \Z_2)$ in \cite{St}) 
contains only one nontrivial element $w_1^3$ and 
$\mathrm{Sq}^1(w_1^3) \ne 0$, there is only one class $z$ that 
satisfies condition (\ref{eq:zw}).

\begin{theorem}\label{th:2} 
Let $\xi$ be an arbitrary $k$-plane bundle over a
CW complex $W$ of dimension $4$ with $k \geq 5$. 
The primary obstruction to the
existence of a $(k-1)$-frame on $\xi$ is
$w_2(\xi)$, i.e.\ $w_2(\xi)$ vanishes if and only if
$\xi$ admits a pin structure, say $\pn$.
The secondary obstruction is a class 
$z(\xi, \pn) \in H^4(W; \Z)$ for which $2z(\xi,
\pn) = p_1(\xi)$ and 
$z(\xi, \pn) \equiv w_4(\xi) \pmod{2}$. 
Furthermore, $z(\xi) = z(\xi, \pn)$ does not depend
on a particular pin structure $\pn$ on $\xi$.
In other words, 
if $w_2(\xi)=0$, then $\xi$ admits a $(k-1)$-frame
if and only if $z(\xi) = 0$.
\end{theorem}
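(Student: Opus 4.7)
The plan is to analyse the Moore--Postnikov tower of the fibration
\[
  V_{k-1,k}\longrightarrow\BO_1\stackrel{\pi}{\longrightarrow}\BO_k,\qquad \pi\co L\longmapsto L\oplus\vep^{k-1},
\]
so that a $(k-1)$-frame on $\xi$ is precisely a lift of $c_\xi\co W\to\BO_k$ through $\pi$; the theorem will then follow by combining this Postnikov analysis with the Dold--Whitney classification~\cite{DW} of rank-$\ge 5$ bundles over a $4$-complex.

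The first step is to compute the low-dimensional homotopy groups of the fibre. From the principal bundle $O(1)\to O(k)\to V_{k-1,k}=O(k)/O(1)$ together with Bott periodicity, one obtains, for $k\ge 5$,
\[
  \pi_0(V_{k-1,k})\cong\Z_2,\qquad \pi_1(V_{k-1,k})\cong\Z_2,\qquad \pi_2(V_{k-1,k})=0,\qquad \pi_3(V_{k-1,k})\cong\Z.
\]
Since every $k$-plane bundle over a $1$-complex splits as $\det\xi\oplus\vep^{k-1}$, a $(k-1)$-frame already exists over $W^{[1]}$. The obstruction to extending across the $2$-cells lives in $H^2(W;\Z_2)$, and by a universal-example argument (the only natural class in $H^2(\BO;\Z_2)$ that vanishes under $\pi^*$ is $w_2$) this obstruction equals $w_2(\xi)$. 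Thus a pin structure $\pn$ on $\xi$, in the sense of the paper, exists if and only if $w_2(\xi)=0$.

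Next, fix such a $\pn$. Because $\pi_2(V_{k-1,k})=0$, the partial lift extends canonically over $W^{[3]}$, so the obstruction to a global lift is a class
\[
  z(\xi,\pn)\in H^4\bigl(W;\pi_3(V_{k-1,k})\bigr)=H^4(W;\Z),
\]
the coefficients being untwisted because the $\pi_1(\BO)$-action on $\pi_3(O)\cong\Z$ is trivial. By naturality, $z(\xi,\pn)$ is the pullback of a universal class in $H^4(\BPIN;\Z)$ via the pin-classifying map $W\to\BPIN$. The Appendix identifies this universal class with the class $z$ of~(\ref{eq:zw}), from which the identities $2z(\xi,\pn)=p_1(\xi)$ and $z(\xi,\pn)\equiv w_4(\xi)\pmod 2$ follow at once.

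The main obstacle I expect is the independence of $z(\xi,\pn)$ from the chosen pin structure $\pn$. Two pin structures on $\xi$ differ by a class in $H^2(W;\Z_2)$, and a priori the difference $z(\xi,\pn)-z(\xi,\pn')$ is a functorial $\Z$-valued secondary operation of this discrepancy; the identities $2z=p_1$ and $z\equiv w_4\pmod 2$ already force the difference to be $2$-torsion and to reduce to $0$ modulo $2$. To eliminate the residual ambiguity I would appeal to Dold--Whitney~\cite{DW}, whose classification of rank-$\ge 5$ bundles over a $4$-complex in terms of $(w_1,w_2,p_1,w_4)$ forces every obstruction to admitting a $(k-1)$-frame to be a function of $\xi$ alone; consequently $z(\xi,\pn)$ depends only on $\xi$, so setting $z(\xi):=z(\xi,\pn)$ is well defined. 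The final biconditional ``$\xi$ admits a $(k-1)$-frame if and only if $w_2(\xi)=0$ and $z(\xi)=0$'' is then read off the Postnikov assembly.
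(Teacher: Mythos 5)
Your route is the Moore--Postnikov one that the paper only sketches in Remark~\ref{rem:Postnikov} and the Appendix; the paper's actual proof of Theorem~\ref{th:2} instead reduces the frame problem to the triviality of the stable bundle $\eta\oplus\bar\xi$ (where $\eta$ is the line bundle with $w_1(\eta)=w_1(\xi)$ and $\bar\xi$ is an inverse bundle of $\xi$) and then invokes Dold--Whitney directly, which packages the well-definedness of the secondary class for free. Two small slips in your set-up first: $V_{k-1,k}\cong \mathrm{SO}(k)$ is connected, so $\pi_0(V_{k-1,k})$ is trivial rather than $\Z_2$; and pin structures form a torsor over $H^1(W;\Z_2)$, not $H^2(W;\Z_2)$, since they are distinguished over the $1$-skeleton by $\pi_1(V_{k-1,k})\cong\Z_2$. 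Neither of these affects your outline.

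The genuine gap is exactly at the step you flag: the independence of $z(\xi,\pn)$ from $\pn$. Your two constraints show only that $z(\xi,\pn)-z(\xi,\pn')$ is a $2$-torsion class lying in $2H^4(W;\Z)$, and such a class can be nonzero when $H^4(W;\Z)$ contains $\Z_4$-summands, so the ambiguity is not eliminated by the identities $2z=p_1$ and $z\equiv w_4\pmod 2$ alone. The appeal to Dold--Whitney as you state it does not close this either: their classification is by $w_2$ together with an \emph{integral} class $\delta$ satisfying $2\delta=p_1$, which is strictly finer than the quadruple $(w_1,w_2,p_1,w_4)$ in the presence of torsion; and in any case the observation that the existence of a $(k-1)$-frame is determined by $\xi$ alone is a tautology that says nothing about whether the particular cohomology class $z(\xi,\pn)$ is independent of the auxiliary choice $\pn$. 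What is actually needed is either the precise Dold--Whitney statement that the difference-cocycle class is independent of the chosen trivialization over $W^{[3]}$ (this is what the paper cites), or, staying inside your Postnikov framework, a verification that the action map $\alpha\co K(\Z_2,1)\times\BPIN_k\to\BPIN_k$ pulls the second Postnikov invariant back to $1\times k$ with no cross terms --- the analogue of the computation the paper carries out in detail for $V_{3,5}$ in the proof of Theorem~\ref{thm:43non-ori}.
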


\begin{proof}
There exists a vector bundle $\bar{\xi}$
of sufficiently high rank
over $W$ such that $\xi \oplus \bar{\xi}$ is
trivial. Let $\eta$ be the line bundle
over $W$ with $w_1(\eta) = w_1(\xi)$. Then
$\xi$ admits a $(k-1)$-frame
if and only if $\xi$ is isomorphic to
$\eta \oplus \varepsilon^{k-1}$. 
Since $k \geq 5$,
this last condition is equivalent to
the triviality of $\eta \oplus \bar{\xi}$.

According to \cite{DW} (see also \cite{EM}), the primary 
obstruction is given by
$w_2(\eta \oplus \bar{\xi}) = w_2(\xi)$.
If this vanishes, then $\eta \oplus \bar{\xi}$
is trivial over $W^{[3]}$.
The secondary obstruction
is given by a class $d$
in $H^4(W; \Z)$, which is the cohomology
class of a difference cocycle, and this class does not depend
on a particular trivialization of $\eta \oplus \bar{\xi}$
over $W^{[3]}$. Furthermore, by \cite[Theorem 2(c)]{DW}, the class $z=-d$ or $z=d$ (depending on the sign convention) satisfies the condition (3.1), and therefore
$d$ can be
identified with $z(\xi)$ up to sign. This completes the
proof.
\end{proof}

\begin{remark}\label{rem:Postnikov}
Let us interpret the above result from the viewpoint of Postnikov decompositions. 
We may assume that $k = 5$.
Let us consider the fibration
\[
    V_{4,5}\longrightarrow \BO_1 \stackrel{\pi} \longrightarrow
    \BO_5.
\]
Since the fundamental group of $\BO_5$ acts 
trivially on the homotopy groups of the fiber
$V_{4,5}$ (see \cite[p.~2]{James}) and $\pi_1(V_{4,5}) \cong \Z_2$ is
abelian, there is a classical Postnikov decomposition of $\pi$. Its
first Postnikov invariant, $w_2$, determines a principal fibration
$r : \BPIN_5 \to \BO_5$. 
We choose a fibration $q$ that covers the map $\pi$
with respect to $r$ so that the homotopy class of 
the restriction $v: V_{4,5} \to K(\Z_2, 1)$ 
of $q$ to a fiber of $\pi$ coincides
with the fundamental class of $V_{4,5}$:
\[
\begin{CD} V_{4,5} @>>> \BO_1  \\
            @VvVV    @VqVV  @. \\
            K(\Z_2, 1) @>>> \BPIN_5 @.\\
            @. @VrVV \\
            @. \BO_5 @>w_2>> K(\Z_2, 2).
\end{CD}
\]
Note that $\pi_2(V_{4,5})= 0$ and $\pi_3(V_{4,5})\cong \Z$.
Hence, for the fiber $F$ of $q$, we have $\pi_1(F)=\pi_2(F)=0$ and
$\pi_3(F)\cong\Z$. Consequently, the next nontrivial Postnikov
invariant is a class in $H^4(\BPIN_5; \Z)
= H^4(\BPIN; \Z)$.
By using spectral sequence arguments,
we can identify this class with $z$ up to sign
(see Appendix).
Therefore, $z(\xi)$ can be considered
as the secondary obstruction in the
sense of the Postnikov decomposition as well.
\end{remark}

Let us apply the above results to
the existence problem of fold maps between
$4$-manifolds.
By Theorem~\ref{Ando}, we 
have the following.

\begin{theorem}\label{th:1} 
Let $g : M \to N$ be a continuous
map between smooth $4$-manifolds,
where $M$ is closed and connected.
Then $g$ is homotopic to a fold map
if and only if
$w_2(TM - g^*TN)\in H^2(M; \Z_2)$ vanishes and
one of the following conditions holds,
where $TM - g^*TN$ denotes the formal difference bundle.
\begin{itemize}
\item[\textup{(i)}]
When $M$ is orientable, 
$p_1(TM-g^*TN) \in H^4(M; \Z)$ vanishes.
\item[\textup{(ii)}] When $M$ is non-orientable,
$w_4(TM - g^*TN) \in H^4(M; \Z_2)$
vanishes.
\end{itemize}
\end{theorem}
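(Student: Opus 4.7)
The strategy is to reduce the statement to Theorem~\ref{th:2} via Ando's $h$-principle. Since $\dim M - \dim N = 0$ is even, every fold map $M\to N$ is automatically tame, so by Theorem~\ref{Ando} the existence of a fold map homotopic to $g$ is equivalent to the existence of a fiberwise epimorphism $TM\oplus\vep^1\to g^*TN$. By the Lemma in Section~\ref{Andohp}, this in turn translates into the condition
\[
   \span^0(TM\oplus g^*\nu_N) \ \geq \ 3+\dim\nu_N,
\]
where $\nu_N$ is the normal bundle of an embedding $N\hookrightarrow \R^m$ for $m$ sufficiently large.

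Unwinding the definition of stable span, the above inequality is equivalent to the existence, for some $n\geq 0$, of a $(k-1)$-frame of the rank-$k$ bundle
\[
   \zeta_n \ := \ TM\oplus g^*\nu_N\oplus \vep^n, \qquad k \ = \ 4+\dim\nu_N+n,
\]
over the $4$-dimensional complex $M$. Choosing $n$ large enough that $k\geq 5$ and applying Theorem~\ref{th:2} to $\zeta_n$, this happens if and only if $w_2(\zeta_n)=0$ together with $z(\zeta_n)=0\in H^4(M;\Z)$, the latter class being characterized by $2z(\zeta_n)=p_1(\zeta_n)$ and $z(\zeta_n)\equiv w_4(\zeta_n)\pmod 2$. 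Because $g^*TN\oplus g^*\nu_N$ is trivial, $\zeta_n$ is stably isomorphic to the formal difference $TM-g^*TN$, so its Stiefel--Whitney and Pontrjagin classes agree with those of $TM-g^*TN$.

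It remains to unpack $z(\zeta_n)=0$ in each of the two cases by analyzing $H^4(M;\Z)$. When $M$ is orientable, Poincar\'e duality gives $H^4(M;\Z)\cong \Z$, which is torsion-free, so the identity $2z(\zeta_n)=p_1(TM-g^*TN)$ shows that $z(\zeta_n)=0$ is equivalent to $p_1(TM-g^*TN)=0$. When $M$ is non-orientable, twisted Poincar\'e duality gives $H^4(M;\Z)\cong \Z_2$; hence $p_1(TM-g^*TN)=2z(\zeta_n)$ automatically vanishes, and the Bockstein long exact sequence associated with $0\to \Z\xrightarrow{2}\Z\to \Z_2\to 0$, combined with $H^5(M;\Z)=0$, identifies the mod-$2$ reduction $H^4(M;\Z)\to H^4(M;\Z_2)$ as an isomorphism. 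Consequently $z(\zeta_n)=0$ is equivalent to $w_4(TM-g^*TN)=0$. The delicate step is this torsion analysis of $H^4(M;\Z)$, which forces the dichotomy between the orientable (Pontrjagin class) and non-orientable (top Stiefel--Whitney class) obstructions arising from the single integral class $z$.
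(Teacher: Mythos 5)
Your proposal is correct and follows essentially the same route as the paper: reduce via Ando's theorem (noting tameness is automatic since $\dim M-\dim N$ is even) and the stable-span lemma to the frame-existence criterion of Theorem~\ref{th:2}, then resolve $z=0$ by observing that $H^4(M;\Z)\cong\Z$ is torsion-free in the orientable case and that mod-$2$ reduction $H^4(M;\Z)\to H^4(M;\Z_2)$ is an isomorphism in the non-orientable case. The paper's own proof is just this last cohomological step, with the reduction to Theorem~\ref{th:2} left implicit, so your write-up merely makes explicit what the paper takes for granted.
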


\begin{proof}
If $M$ is orientable, then $H^4(M; \Z)
\cong \Z$. Therefore, the vanishing of $z$ is equivalent
to the vanishing of $p_1$.
When $M$ is non-orientable,
the reduction modulo
two $\Z \to \Z_2$ gives rise to an isomorphism 
$H^4(M; \Z)\to H^4(M; \Z_2)$. Then the 
vanishing of $z$ is equivalent
to the vanishing of $w_4$.
\end{proof}

As an immediate corollary, we have the following.

\begin{corollary}\label{th:3} 
A closed connected $4$-manifold
$M$ admits a fold map into $\R^4$ 
if and only if
$w_2(M)\in H^2(M; \Z_2)$ vanishes and
\begin{itemize}
\item[\textup{(i)}]
$p_1(M) = 0$ in $H^4(M; \Z)$ when $M$ is
orientable, or
\item[\textup{(ii)}]
$w_4(M) = 0$ in $H^4(M; \Z_2)$ when
$M$ is non-orientable.
\end{itemize}
\end{corollary}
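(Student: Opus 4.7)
The plan is to specialize Theorem~\ref{th:1} to the case $N = \R^4$, which makes this corollary essentially immediate. First I would observe that, since $\R^4$ is contractible, every continuous map $g : M \to \R^4$ is null-homotopic, so the pullback $g^*TN$ is isomorphic to the trivial rank-$4$ bundle $\vep^4$ over $M$. Therefore the formal difference bundle $TM - g^*TN$ is stably equivalent to $TM$ itself, and consequently
\[
w_2(TM - g^*TN) = w_2(M), \qquad p_1(TM - g^*TN) = p_1(M), \qquad w_4(TM - g^*TN) = w_4(M).
\]

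Next I would note that, because the target $\R^4$ is contractible, any two continuous maps $M \to \R^4$ are homotopic, so the existence of \emph{some} fold map $M \to \R^4$ is equivalent to the existence of a fold map homotopic to any preselected reference map $g$. Plugging the displayed identifications of characteristic classes into Theorem~\ref{th:1} then yields exactly the stated dichotomy: $w_2(M) = 0$ is required in both cases, together with $p_1(M) = 0$ when $M$ is orientable (case (i)) or $w_4(M) = 0$ when $M$ is non-orientable (case (ii)).

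There is essentially no obstacle here, since all the genuine content is concentrated in Theorem~\ref{th:1} (which in turn rests on Theorem~\ref{th:2} and the $h$-principle Theorem~\ref{Ando}). The only verification needed is the trivial one that, because $g^*T\R^4$ is a trivial bundle, the characteristic classes of $TM - g^*TN$ reduce to those of $TM$, so the statement is just the bundle-theoretic specialization of the map-level result of Theorem~\ref{th:1}.
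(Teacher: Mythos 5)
Your proposal is correct and matches the paper's intent exactly: the paper presents this as an immediate corollary of Theorem~\ref{th:1}, obtained precisely by taking $N=\R^4$, noting that $g^*T\R^4$ is trivial so the characteristic classes of $TM-g^*TN$ reduce to those of $TM$, and that contractibility of $\R^4$ makes the choice of homotopy class irrelevant. No gaps.
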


\begin{example}
Let $\Sigma$ be a closed connected orientable
surface and $F$ a closed connected non-orientable
surface. Then, $F \times \Sigma$ admits
a fold map into $\R^4$ if $F$ has even Euler characteristic,
since then $w_2$ and $w_4$ both vanish.
In fact, it is easy to construct a fold
map $g : F \to \R^2$ if $F$ has even Euler
characteristic. Then the composition
$$F \times \Sigma \spmapright{g \times \mathrm{id}_\Sigma}
\R^2 \times \Sigma \hookrightarrow \R^4$$
gives an explicit fold map, where the last map
is an arbitrary embedding.

On the other hand, if $F$ has odd Euler characteristic,
then $F \times \Sigma$ does not admit a fold map
into $\R^4$, since then $w_2$ does not vanish,
although its $w_4$ vanishes.
The real $4$-dimensional projective space $\R P^4$
does not admit a fold map into $\R^4$,
since $w_4$ does not vanish, although its $w_2$ vanishes.

Let $M$ be the underlying smooth $4$-manifold
of a K3 surface. Then it does not admit a fold
map into $\R^4$, since its $p_1$ does not vanish,
although its $w_2$ vanishes. On the other hand,
$\C P^2 \sharp \overline{\C P^2}$ does not
admit a fold map into $\R^4$, since
its $w_2$ does not vanish, although its $p_1$ vanishes.
\end{example}

%
We can apply the proof of 
Theorem~\ref{th:2} to
the existence problem of fold maps between
higher dimensional manifolds as well, as follows.

\begin{theorem}\label{thm:equi}
Let $g : M \to N$ be a continuous
map between smooth $n$-dimensional manifolds,
where $M$ is closed, connected and $n < 8$.
Then $g$ is homotopic to a fold map
if and only if both
$w_2(TM - g^*TN)\in H^2(M; \Z_2)$ 
and $z(TM - g^*TN, \pn) \in H^4(M; \Z)$
vanish, where $TM - g^*TN$ denotes the formal 
difference bundle,
$\pn$ is a pin structure, and
$z(TM - g^*TN, \pn)$ does not
depend on a particular choice of $\pn$.
\end{theorem}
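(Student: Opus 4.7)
The plan is to imitate the proof of Theorem~\ref{th:2} almost verbatim, upgraded from $4$-dimensional to $(\leq 7)$-dimensional CW complexes, and combined with Ando's $h$-principle. First I would note that since $\dim M - \dim N = 0$ is even, every fold map $M\to N$ is automatically tame; so by Theorem~\ref{Ando} together with the lemma preceding Corollary~\ref{span}, $g$ is homotopic to a fold map if and only if $\span^0(\zeta)\geq m-1$, where $\zeta := TM\oplus g^*\nu_N$ has rank $m$. Choosing $\bar{\zeta}$ with $\zeta\oplus\bar{\zeta}$ trivial and letting $\eta$ be the line bundle with $w_1(\eta)=w_1(\zeta)=w_1(TM-g^*TN)$, the span condition is equivalent to the triviality of $\eta\oplus\bar{\zeta}$, exactly as at the beginning of the proof of Theorem~\ref{th:2}.

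The second step is to run the standard obstruction theory for trivializing the stable vector bundle $\eta\oplus\bar{\zeta}$ over $M$. The obstructions lie in $H^i(M;\pi_{i-1}(O))$ (with a $w_1(M)$-twisted coefficient system if needed) for $i\geq 1$; the $H^1$-obstruction vanishes by construction of $\eta$. By Bott periodicity,
\[
  \pi_1(O)=\Z_2,\quad \pi_2(O)=0,\quad \pi_3(O)=\Z,\quad \pi_4(O)=\pi_5(O)=\pi_6(O)=0,
\]
so under the hypothesis $n<8$ the only obstructions that can survive past the primary step lie in $H^2(M;\Z_2)$ and $H^4(M;\Z)$, while the would-be $H^8$-obstruction coming from $\pi_7(O)=\Z$ is killed by the dimension bound. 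The $\Z_2$-coefficients in degree $2$ are automatically untwisted; for the $\Z$-coefficients in degree $4$, the action of $\pi_1(\BO)$ on $\pi_3$ of the fiber $V_{m,m+1}\simeq SO(m+1)$ is trivial, so these coefficients are untwisted as well.

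The identification of the two surviving obstructions then proceeds exactly as in Theorem~\ref{th:2}: the primary obstruction equals $w_2(\eta\oplus\bar{\zeta})=w_2(\zeta)=w_2(TM-g^*TN)$, and once it vanishes $\zeta$ acquires a pin structure $\pn$; the secondary obstruction is a difference-cocycle class $z(\zeta,\pn)=z(TM-g^*TN,\pn)\in H^4(M;\Z)$, characterized up to sign by the relations $2z=p_1$ and $z\equiv w_4\pmod 2$ via Dold--Whitney~\cite{DW} combined with the $\BPIN$-computation in the Appendix, and shown to be independent of $\pn$ by the same uniqueness argument as for Theorem~\ref{th:2}. The main obstacle in the whole argument is thus only the verification that no tertiary or higher obstructions survive in degrees $5,6,7$; this is immediate from Bott periodicity and is precisely what forces the hypothesis $n<8$, since $\pi_7(O)=\Z$ would otherwise reintroduce a genuine $H^8$-obstruction.
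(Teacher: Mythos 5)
Your proposal is correct and follows essentially the same route as the paper: reduce via Ando's theorem and the span lemma to the triviality of a stable bundle (equivalently, a frame problem on a Stiefel fibration), identify the surviving obstructions with $w_2$ and $z$ exactly as in Theorem~\ref{th:2}, and rule out higher obstructions in degrees $5$, $6$, $7$ using the vanishing of $\pi_4$, $\pi_5$, $\pi_6$ of the stable orthogonal group. The paper phrases this last step as the absence of Postnikov invariants of the fibration $V_{m,m+1}\to \BO_1\to \BO_{m+1}$ in those degrees (via $\pi_i(V_{m,m+1})=0$ for $i=4,5,6$), which is the same computation as your appeal to Bott periodicity.
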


Theorem~\ref{thm:equi} follows from the final remark
of \cite[\S2]{DW} in view of the interpretation of $z$ observed in the proof of Theorem~\ref{th:2}. In terms of Postnikov decompositions the theorem can be proved as follows.  

\begin{proof}




Recall that the homotopy groups of the space $V_{m-1,m}
\cong \mathrm{SO}_m$ get stabilized as $m$ tends to infinity, since the
inclusion $\mathrm{SO}_m \longrightarrow 
\mathrm{SO}_{m+1}$ induces an isomorphism of
homotopy groups for dimensions less than $m-1$.
Therefore, the comparison of the two fibrations
\[
\begin{CD}
V_{m-1,m} @>>> \BO_1 @>>> \BO_m \\
@VVV @VVV @VVV \\
V_{m,m+1} @>>> \BO_1 @>>> \BO_{m+1}
\end{CD}
\]
shows that the Postnikov invariants and their indeterminacies of
the lower fibration in dimensions $\le m-1$ coincide with the
corresponding Postnikov invariants and indeterminacies of the
upper fibration.  On the other hand, we have
$\pi_4(V_{5,6})=0$, and if $m \ge 6$, then 
$\pi_i(V_{m,m+1}) = 0$ for
$i = 4, 5, 6$, which implies that there are no Postnikov 
invariants in dimensions $5$, $6$ and $7$. Consequently, 
for manifolds of dimensions $5$, $6$ and
$7$, the obstruction coincides with that found in
Theorem~\ref{th:2}.
As a consequence,
we obtain Theorem~\ref{thm:equi} in view
of Remark~\ref{rem:Postnikov}.
\end{proof}

Let us end this section by a remark
from the viewpoint of elimination
of singularities.
Let $M$ be a closed connected $4$-manifold and
$g: M \to \R^4$ a stable map (for details, see \cite{GG},
for example).
Note that the set of stable maps is open and
dense in $C^{\infty}(M, \R^4)$ (see \cite{Mather6}).
For a singular point $x \in S(g)$, there exist local coordinates 
$(x_1,x_2,x_3,x_4)$ and $(y_1,y_2,y_3,y_4)$ around 
$x$ and $g(x)$, respectively, such that
$g$ has one of the following normal forms (see \cite{GG}):
\begin{enumerate}
\item
$y_i=x_i\;(i=1,2,3),\;y_4=x_4^2$ ($x$: fold or $A_1$-type);
\item
$y_i=x_i\;(i=1,2,3),\;y_4=x_4^3+x_1x_4$ ($x$: cusp or $A_2$-type);
\item
$y_i=x_i\;(i=1,2,3),\;y_4=x_4^4+x_1x_4^2+x_2x_4$ 
 ($x$: swallowtail or $A_3$-type);
\item
$y_i=x_i\;(i=1,2,3),\;y_4=x_4^5+x_1x_4^3+x_2x_4^2+x_3x_4$ 
 ($x$: butterfly or $A_4$-type);
\item
$y_i=x_i\;(i=1,2),\;y_3=x_3^2+x_1x_4,\;y_4=x_4^2+x_2x_3$ 
 ($x$: hyperbolic umbilic);
\item
$y_i=x_i\;(i=1,2),\;y_3=x_3^2-x_4^2+x_1x_3+x_2x_4,\;
 y_4=x_3x_4+x_2x_3-x_1x_4$ ($x$: elliptic umbilic).
\end{enumerate}
We denote by $A_k(g)$ the set of $A_k$-type singularities of $g$
for $k=1,2,3,4$ and by $\Sigma^{2,0}(g)$ the set of hyperbolic
and elliptic umbilics of $g$.

The Thom polynomials for the above singularities have been
calculated as follows (for example, see \cite[p.~1119]{SS4}):
\begin{center}
\begin{tabular}{ll}
$[S(g)]_2^* = \bar{w}_1 = w_1$, &
$[\overline{A_2(g)}]_2^* = \bar{w}_1^2 + \bar{w}_2 = 
w_2$, 
\\
$[\overline{A_3(g)}]_2^* = \bar{w}_1^3 + \bar{w}_1\bar{w}_2 = 
w_1w_2$, \rule[0.45cm]{0cm}{0cm} &
$[\overline{A_4(g)}]_2^* = \bar{w}_1^4 + \bar{w}_1\bar{w}_3 =
w_1w_3$, \\
$[\overline{\Sigma^{2,0}(g)}]_2^* = \bar{w}_2^2 + 
\bar{w}_1\bar{w}_3 = w_2^2 + w_1w_3$, 
\rule[0.45cm]{0cm}{0cm} &
$[\overline{\Sigma^{2,0}(g)}]^* = \bar{p}_1 = -p_1$,
\end{tabular}
\end{center}
where $\bar{w}_i$ (resp.\ $\bar{p}_i$) denotes the
$i$-th dual Stiefel--Whitney (resp.\ dual Pontrjagin)
class, and
the symbol $[X]_2^*$ denotes the Poincar\'e dual to the
$\Z_2$-homology class of $M$ represented by $X$.
When $M$ is oriented, the normal
bundle to
the $0$-dimensional submanifold $\Sigma^{2,0}(g)$ has
a canonical orientation and
$[\overline{\Sigma^{2,0}(g)}]^*$
denotes the Poincar\'e dual to the $\Z$-homology
class represented by $\Sigma^{2,0}(g)$.
Note that
$w_1w_3$ always vanishes for a closed $4$-manifold.

Let us consider the elimination problem of
singularities of prescribed types. More
precisely, let us consider the elimination
of singularities other than the fold points.
By virtue of the adjacency of singularities, this
is equivalent to the elimination of cusps.

By Corollary~\ref{th:3}, when $M$ is orientable,
the singularities other than the fold points
can be eliminated if and only if their
associated Thom polynomials all vanish. In other
words, the vanishing of the primary obstructions
is sufficient for the elimination.

On the other hand, when $M$ is non-orientable,
even if all the Thom polynomials other than
that for fold points vanish,
the $4$-th Stiefel--Whitney class may not vanish.
In this sense, $w_4$ can be considered as
the secondary obstruction for the elimination
of cusps, or all
those singularities other than the fold points. 
This observation is also justified by Remark~\ref{rem:Postnikov}.

Note that if $g : M \to \R^4$ is 
a stable map with only $A_k$-type singularities,
then $\langle w_4(M), [M]_2\rangle \in \Z_2$
coincides with the Euler characteristic
modulo two of $\overline{A_2(g)}$, where
$[M]_2 \in H_4(M; \Z_2)$ is the $\Z_2$-fundamental
class of $M$ (see \cite{Fukuda1} and \cite[Theorem~4.1]{SS4}).
This means that $w_4(M) = i_!w_2(\overline{A_2(g)})$,
where $i_! : H^2(\overline{A_2(g)}; \Z_2)
\to H^4(M; \Z_2)$ is the Gysin homomorphism
induced by the inclusion $i : \overline{A_2(g)}
\to M$. In this sense, $w_4$ may be considered
as a higher Thom polynomial (compare this with
\cite{Ando3}, for example).

For maps between $n$-dimensional manifolds,
$n = 5, 6, 7$, it is known that
the Thom polynomial for cusp singularities
coincides with $w_2$. Furthermore, the Thom
polynomial for $\Sigma^{2, 0}$ singularities in
integer coefficient is known to be given by
$p_1 + \beta w_3$,
where $\beta$ is the Bockstein homomorphism
associated with the coefficient exact sequence
$$0 \longrightarrow \Z \stackrel{\times 2}{\longrightarrow} \Z
\longrightarrow \Z_2 \longrightarrow 0$$
 (see \cite{FR, Rimanyi, Ronga}).
Note that the modulo two reduction of $\beta w_3$
is equal to $w_1 w_3$, which is the Thom
polynomial for $A_4$ singularities.
Therefore, if the Thom polynomials for
$A_2$, $A_4$ and $\Sigma^{2, 0}$ singularities
all vanish, then $w_2$ and $p_1 = 2z$ vanish.

However, even if the Thom polynomials vanish
for all singularities, the obstruction $z$ may
not vanish. For example, let us consider
the $n$-dimensional manifold $M = \R P^4 \times S^{n-4}$,
$n = 5, 6, 7$. Then, it is easy to see that
all the Thom polynomials for singularities
of codimension up to $4$ vanish.
Singularities of codimension $5$, $6$ or $7$ may
appear and their Thom polynomials are
known (see \cite{FR, Rimanyi}, for example).
However, their Thom polynomials for
$M$ all vanish, since the polynomials
in characteristic classes of $M$
of degree $\geq 5$ all vanish.
On the other hand, $w_4(M)$ does not
vanish, which implies that $z$, whose
modulo two reduction coincides with $w_4$, 
does not vanish. Therefore, for $M$,
all the relevant Thom polynomials vanish,
although it does not admit a fold map into $\R^n$.
This means that the obstruction $z$ is
essentially secondary.

\section{Fold maps of even dimensional manifolds into $\R^4$}
\label{even4}

In the case of maps into $\R^4$ of an even
dimensional manifold $M$, the existence problem of a fold map is
closely related to that of a $3$-frame on
$M$. Let us recall that a fold map of
$M$ into $\R^4$ exists if and only if $\span^0(M) \geq 3$,
i.e., $TM \oplus \varepsilon$ admits
at least four nowhere linearly dependent sections.

To simplify the formulation of statements, in all theorems 
of this section we will assume that the manifold $M$ 
is closed and connected. Let us first recall
the following theorem.

\begin{theorem}[Eagle; Koschorke \cite{Kos}] \label{EK}
If $M$ is an even dimensional manifold
and its Euler characteristic
$\chi(M)$ vanishes, then $\span(M) = 
\span^0(M)$ always holds.
\end{theorem}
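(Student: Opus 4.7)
The inequality $\span(M) \leq \span^0(M)$ is free: any $k$-frame on $TM$ extends to a $(k+1)$-frame on $TM \oplus \vep^1$ by adjoining the tautological section of $\vep^1$. The substance is therefore to show $\span^0(M) \leq \span(M)$. Suppose $\span^0(M) \geq k$, so that there is a bundle $\xi$ of rank $n-k$ over $M$ (where $n = \dim M$) with
\[
TM \oplus \vep^1 \;\cong\; \xi \oplus \vep^{k+1}.
\]
I would like to conclude that $TM \cong \xi \oplus \vep^k$, i.e.\ that $TM$ itself admits a $k$-frame.

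The first concrete step is to invoke the hypothesis $\chi(M)=0$ via Hopf's theorem: the Euler class of $TM$ is the primary (and for a top-dimensional bundle, sole) obstruction to a nowhere-zero section, so there is a vector field without zeros and hence a splitting $TM \cong \tau \oplus \vep^1$ with $\rank \tau = n-1$. Substituting into the stable isomorphism above produces
\[
\tau \oplus \vep^2 \;\cong\; \xi \oplus \vep^{k+1},
\]
and the problem is reduced to cancelling two trivial line summands to get $\tau \cong \xi \oplus \vep^{k-1}$, which is exactly a $(k-1)$-frame on $\tau$ and therefore a $k$-frame on $TM$.

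The remaining step is thus a vector-bundle cancellation problem for rank $n-1$ bundles over the $n$-dimensional base $M$. This is just below the classical stable range, where one copy of $\vep$ can be cancelled freely, so the subtle point is the final cancellation. I would handle this via obstruction theory by comparing the two fibrations
\[
V_{n,k} \longrightarrow \BO_{n-k} \longrightarrow \BO_{n},
\qquad
V_{n+1,k+1} \longrightarrow \BO_{n-k} \longrightarrow \BO_{n+1},
\]
along the inclusion $V_{n,k}\hookrightarrow V_{n+1,k+1}$, whose homotopy fibre through the relevant range is $S^n$. A lift of the classifying map of $TM\oplus\vep^1$ to $\BO_{n-k}$ (the stable $k$-frame) descends to a lift of the classifying map of $TM$ precisely when one top-dimensional obstruction in $H^n(M;\pi_n(S^n))$ vanishes; this obstruction is, up to sign and the usual twisting in the non-orientable case, the Euler class of $TM$. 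Because $n$ is even, this class is $\chi(M)$ times the fundamental class (mod $2$ in the non-orientable case), so the assumption $\chi(M)=0$ makes it vanish and produces the required $k$-frame.

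The main obstacle I expect is precisely this last identification: justifying that the sole remaining Postnikov (i.e.\ cancellation) obstruction is the Euler class. The reason the argument works at all is that the parity of $n$ guarantees $\pi_{n-1}(V_{n,k})$ contributes no extra stable invariants and that the difference between $V_{n,k}$ and $V_{n+1,k+1}$ is essentially a single $S^n$; if $n$ were odd, additional obstructions appear and the equality $\span(M)=\span^0(M)$ can fail. Thus the two hypotheses ($n$ even and $\chi(M)=0$) are both used in an essential way, and once the Postnikov comparison is set up cleanly the conclusion follows.
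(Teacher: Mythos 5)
The paper offers no proof of this statement: it is quoted from Koschorke's lecture notes, where it is established via his singularity/normal-bordism calculus for vector bundle monomorphisms. Judged on its own terms, your attempt has a genuine gap, located exactly where you say you ``expect the main obstacle'': the identification of the last obstruction with the Euler class \emph{is} the content of the theorem, and you assert it rather than prove it. Concretely, after compressing a stable $k$-frame over the $(n-1)$-skeleton (legitimate, since the inclusion $V_{k,n}\to V_{k+1,n+1}$ is $(n-1)$-connected), the top obstruction lies in $H^n(M;\pi_{n-1}(V_{k,n}))$ with twisted coefficients and maps to zero in $H^n(M;\pi_{n-1}(V_{k+1,n+1}))$; by the exact sequence of the fibration $V_{k,n}\to V_{k+1,n+1}\to S^n$ it therefore comes from a class in $H^n(M;\pi_n(S^n))\cong\Z$. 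What must then be proved --- and is not --- is (a) that for a suitable choice of compression this integer is $\pm\chi(M)$ (a Hopf-index-type theorem; this is where the parity of $n$ genuinely enters, through the boundary map $\partial\co\pi_n(S^n)\to\pi_{n-1}(V_{k,n})$), and (b) that the indeterminacy coming from the choice of frame over the $(n-1)$-skeleton does not move the obstruction outside the image of $\partial_*$. Neither point is formal, and together they are equivalent to the theorem.

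Two supporting claims are also incorrect as stated. The assertion that one copy of $\vep$ cancels freely from $\tau\oplus\vep^2\cong\xi\oplus\vep^{k+1}$ is false: rank-$n$ bundles over an $n$-complex are not determined by their stabilizations (e.g.\ $TS^{2m}\oplus\vep\cong\vep^{2m+1}$ while $TS^{2m}\not\cong\vep^{2m}$ --- precisely the Euler-class phenomenon you are trying to control), so both cancellation steps, not only the last, carry obstructions. And the homotopy fibre of $V_{k,n}\hookrightarrow V_{k+1,n+1}$ is $\Omega S^n$, not $S^n$; the correct input is the fibration $V_{k,n}\to V_{k+1,n+1}\to S^n$ used above. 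The overall strategy (classical obstruction theory plus Hopf's theorem) can be pushed through, but as written the proposal reduces the statement to an unproved assertion of the same strength.
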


First, let us consider the case where the 
dimension of the manifold $M$ is a multiple of $4$.

\begin{theorem}[Atiyah--Dupont \cite{AD}]\label{th:AD} 
An oriented manifold $M$ of dimension $n = 4k$, with $k > 1$, 
admits a $3$-frame if and only if $w_{n-2}(M) = 0$, $\chi(M) = 0$ 
and the signature $\sigma(M)$ of $M$ is divisible by $8$.
\end{theorem}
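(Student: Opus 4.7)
The plan is to set up the lifting problem for the classifying map $\tau\co M\to \BSO_n$ of $TM$ along the Stiefel fibration
$$
V_{3,n}\longrightarrow \BSO_{n-3}\stackrel{\pi}{\longrightarrow} \BSO_n,\qquad n=4k,
$$
and to analyze the Moore--Postnikov tower of $\pi$. A $3$-frame on $TM$ is precisely such a lift. Since $\dim M=n$, the relevant obstructions live in $H^{i+1}(M;\pi_i(V_{3,n}))$ for $i=n-3,n-2,n-1$; the homotopy groups here are computable from the fibrations $S^{n-3}\to V_{3,n}\to V_{2,n}$ and $S^{n-2}\to V_{2,n}\to S^{n-1}$. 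The first nontrivial obstruction lies in $H^{n-2}(M;\Z_2)$, and the top one in $H^n(M;\pi_{n-1}(V_{3,n}))$, where $\pi_{n-1}(V_{3,n})$ contains a free $\Z$-summand responsible for the signature condition.

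For necessity, $\chi(M)=0$ is the Poincar\'e--Hopf obstruction to a single nowhere zero section; $w_{n-2}(M)=0$ follows from the splitting $TM\cong\vep^3\oplus\eta$ with $\rank\eta=n-3$, which forces $w_i(TM)=w_i(\eta)=0$ for $i>n-3$; and the divisibility $8\mid\sigma(M)$ is a $KO$-theoretic consequence of the same splitting, obtained by applying the Atiyah--Singer index theorem to the signature operator twisted by the virtual bundle $[\eta]-(n-3)\in\widetilde{KO}(M)$ together with a Rokhlin-type integrality argument.

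For sufficiency, the primary obstruction in $H^{n-2}(M;\Z_2)$ is identified with $w_{n-2}(M)$ by classical Stiefel--Whitney theory, in the spirit of Theorem~\ref{th:2}. Assuming $w_{n-2}(M)=0$, the secondary $\Z_2$-valued obstruction in $H^{n-1}(M;\Z_2)$ is handled by a cohomology-operations argument using Wu relations and the orientability of $M$, showing that the relevant $k$-invariant is killed modulo the indeterminacy coming from the previous stage of the tower. The decisive step is the top obstruction in $H^n(M;\pi_{n-1}(V_{3,n}))$; extracting its free $\Z$-component and pairing with $[M]$ yields an integer that must be identified with an explicit $\Z$-linear combination of $\chi(M)$ and $\sigma(M)$.

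The main obstacle is this last identification. One must show that the top $k$-invariant of the tower, viewed in $H^n(\BSO_{n-3};\Q)$, is a rational combination $a\cdot e+b\cdot L_k$ (with $e$ the Euler class and $L_k$ the Hirzebruch $L$-class) whose integrality upon evaluation on $[M]$ is equivalent to the conjunction $\chi(M)=0$ and $8\mid\sigma(M)$. This is precisely where Atiyah--Dupont replace cohomological obstruction theory by real $K$-theory: they phrase the existence of a $3$-frame as the reducibility of $[TM]-n\in\widetilde{KO}(M)$ to a class of virtual rank $-3$, and the factor $8$ emerges from the integrality of the $KO$-theoretic Pontrjagin character, tied to $\widetilde{KO}^{-4}(\mathrm{pt})\cong\Z$. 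The coefficients $a,b$ can be pinned down by testing the formula on manifolds such as $\mathbb{HP}^k$ and appropriate sphere bundles, where $\chi$, $\sigma$, and span are all simultaneously computable.
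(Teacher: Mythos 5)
You should first note that the paper contains no proof of this statement at all: Theorem~\ref{th:AD} is imported verbatim from Atiyah--Dupont \cite{AD} as a known external result and is only \emph{used} (in the proof of Theorem~\ref{th:4k}), so there is no internal argument to compare yours against. Judged on its own terms, your proposal is a reasonable roadmap of the classical obstruction-theoretic setting, and the necessity half is essentially sound: $\chi(M)=0$ from Poincar\'e--Hopf, and $w_{n-2}(M)=0$ from the splitting $TM\cong\vep^3\oplus\eta$ with $\rank\eta=n-3$.

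However, as a proof the proposal has genuine gaps. First, the secondary obstruction in $H^{n-1}(M;\pi_{n-2}(V_{3,n}))$ is dispatched by an unspecified ``cohomology-operations argument''; controlling this class \emph{and its indeterminacy} is exactly the kind of step that cannot be waved through (compare the care taken with the indeterminacy in the proof of Theorem~\ref{thm:43non-ori}). Second, and decisively, the step that actually produces the divisibility $8\mid\sigma(M)$ is explicitly outsourced to Atiyah--Dupont's $KO$-theoretic index machinery, so the argument is circular as a proof of their theorem: you have reproduced the frame in which the theorem sits, not its content. Third, the claim that the vanishing of a single top obstruction, paired with $[M]$ to give one integer, is ``equivalent to the conjunction $\chi(M)=0$ and $8\mid\sigma(M)$'' cannot be correct as stated: these are two independent conditions, and in the actual tower $\BSO_{n-3}\to\BSO_{n-2}\to\BSO_{n-1}\to\BSO_n$ the Euler characteristic appears as the top obstruction at the first stage, while the signature condition emerges from the computation of the image of the index in $\pi_{n-1}(V_{3,n})$ (which is not simply $\Z$) at the last stage. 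Finally, the hypothesis $k>1$ is never used in your sketch, yet the theorem fails without it; an argument that does not see where low dimensions break down is missing something essential.
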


By using Theorem~\ref{th:AD}, we can prove the
following.

\begin{theorem}\label{th:4k} 
An oriented manifold $M$ of dimension $n = 4k$, with $k > 2$, 
admits a fold map into $\R^4$ if and only if 
$w_{n-2}(M) = 0$ and the signature $\sigma(M)$ of $M$ 
is divisible by $8$.
\end{theorem}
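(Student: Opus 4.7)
Proof plan: By Corollary~\ref{span}, since $n-4 = 4k-4$ is even, a fold map $M \to \R^4$ exists if and only if $\span^0(M) \geq 3$, i.e., $TM \oplus \vep$ admits four nowhere linearly dependent sections. My plan is to compare this stable span condition with the unstable one handled by Atiyah--Dupont (Theorem~\ref{th:AD}), and to remove the Euler-characteristic hypothesis present there by a stabilization argument.

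For the necessity direction, I would note that $\span^0(M) \geq 3$ forces $TM$ to be stably isomorphic to some rank-$(n-3)$ bundle $\eta$. Then $w_{n-2}(M) = w_{n-2}(\eta) = 0$ is immediate from the rank constraint. For the signature, since Pontrjagin classes are stable and $\sigma(M)$ is a polynomial in them via the Hirzebruch signature theorem, $\sigma(M)$ agrees with the corresponding expression in the Pontrjagin classes of $\eta$; the divisibility $\sigma(M) \equiv 0 \pmod 8$ then follows from the same integrality analysis that produces the signature condition in Atiyah--Dupont.

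For sufficiency, assume $w_{n-2}(M) = 0$ and $\sigma(M) \equiv 0 \pmod 8$. If $\chi(M) = 0$, then Theorem~\ref{EK} gives $\span^0(M) = \span(M)$, and Theorem~\ref{th:AD} applied to $M$ yields $\span(M) \geq 3$, hence $\span^0(M) \geq 3$. If $\chi(M) \neq 0$, then Theorem~\ref{th:AD} is not directly applicable, and I would instead pass to the product $M \times S^1$: by Remark~\ref{rem:s1}, $\span(M \times S^1) = \span^0(M) + 1$, so it suffices to produce a $4$-frame on this $(4k+1)$-dimensional oriented manifold, which automatically satisfies $\chi(M \times S^1) = 0$ and whose Stiefel--Whitney and Pontrjagin classes agree with those of $M$. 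Here I would apply an odd-dimensional analogue of Atiyah--Dupont in which the Euler-class obstruction is automatically absent.

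The main obstacle is precisely the case $\chi(M) \neq 0$: one must justify that stabilization (or equivalently the passage to $M \times S^1$) removes the Euler-class Postnikov invariant while leaving the $w_{n-2}$ and mod-$8$ signature obstructions intact. Intrinsically this amounts to analyzing the Postnikov tower of the fibration $V_{4, n+1} \to \mathrm{BO}_{n-3} \to \mathrm{BO}_{n+1}$ and comparing it with the tower for $V_{3, n} \to \mathrm{BO}_{n-3} \to \mathrm{BO}_n$ used by Atiyah--Dupont. The hypothesis $k > 2$ (so $n \geq 12$) should be used to ensure that the homotopy groups $\pi_i(V_{4, n+1})$ contribute no extra Postnikov invariants in the cohomological range of $M$, so that only the primary class $w_{n-2}$ and a single secondary class detecting $\sigma \bmod 8$ survive.
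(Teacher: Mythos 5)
Your reduction via Corollary~\ref{span} to the condition $\span^0(M)\ge 3$ and your treatment of the case $\chi(M)=0$ (combining Theorem~\ref{EK} with Theorem~\ref{th:AD}) match the paper, but the case $\chi(M)\neq 0$ --- which you correctly identify as the crux --- is left unresolved, and the route you sketch for it does not go through. Passing to $M\times S^1$ converts the problem into finding a $4$-frame on a $(4k+1)$-dimensional manifold, and there is no ``odd-dimensional analogue of Atiyah--Dupont'' for $4$-frames available to invoke: Atiyah--Dupont settle the existence of $2$- and $3$-frames, and the $4$-field problem is substantially harder and is not solved in the generality you would need. Similarly, your necessity argument for $\sigma(M)\equiv 0\pmod 8$ (``the same integrality analysis that produces the signature condition in Atiyah--Dupont'') is an assertion rather than a proof when $\chi(M)\neq 0$, since Theorem~\ref{th:AD} presupposes $\chi(M)=0$ and its signature obstruction is index-theoretic, not a formal consequence of the stability of Pontrjagin classes.

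The paper closes this gap by a different device: an induction on $\chi(M)$ via connected sums. If $\chi(M)>0$ one forms $M\sharp(S^{2k-1}\times S^{2k+1})$, which lowers $\chi$ by $2$ while preserving $w_{n-2}$ and $\sigma$; if $\chi(M)<0$ one uses $M\sharp(S^{2k}\times S^{2k})$ to raise $\chi$ by $2$; both directions of the equivalence are transported along the connected sum until one reaches $\chi=0$. The only delicate step is pulling a stable $3$-frame on $M\sharp S$ back to one on $M$, which works because every stable $3$-frame over $S^{2k-1}\vee S^{2k+1}$ is trivial: $V_{4,4k+1}$ is $(4k-4)$-connected and $2k+1\le 4k-4$ exactly when $k\ge 3$. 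This is the true role of the hypothesis $k>2$ --- not, as you suggest, the absence of Postnikov invariants of $V_{4,n+1}$ in the cohomological range of $M$; those invariants in degrees $4k-2$ through $4k$ are precisely $w_{n-2}$ and the class detecting $\sigma\bmod 8$, and they are very much present. (The paper's remark following Theorem~\ref{th:4k} records that the argument breaks for $k=2$ because a nontrivial stable $3$-frame exists over $S^{3}\vee S^{5}$.) To repair your write-up you would need either to supply this connected-sum induction or to prove the $4$-field statement on $M\times S^1$ that you are implicitly assuming.
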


\begin{proof}
In the case where $\chi(M) = 0$ the assertion follows 
from Theorem~\ref{Ando} and the Atiyah--Dupont theorem. 
If $\chi(M)$ is odd, then neither the signature of $M$ is
divisible by $8$, nor the manifold $M$ admits a stable $3$-frame. 
Hence, we may assume that $\chi(M)$ is even.

Let us first consider the case where $\chi(M) > 0$. 
Suppose that the statement of the theorem has
been proved for all manifolds with Euler characteristic $\chi(M)-2$. 
In particular, we assume that the theorem holds true for the 
connected sum $M \sharp S$, where $S$ stands for 
$S^{2k-1} \times S^{2k+1}$.

If $M$ admits a fold map, then so does $M \sharp S$.
Hence, by the inductive assumption $w_{4k-2}(M \sharp S)$ 
vanishes and the
signature of $M \sharp S$ is divisible by $8$. Consequently,
$w_{4k-2}(M) = 0$ and $\sigma(M) \equiv 0 \pmod{8}$, as it has been
claimed.

Conversely, if $w_{4k-2}(M) = 0$ and $\sigma(M) \equiv 0 \pmod{8}$,
then the same equalities hold for $M \sharp S$ as well.
Consequently, by the inductive assumption, the bundle 
$T(M \sharp S) \oplus \varepsilon$ admits a $4$-frame. 
On the other hand, since $\pi_i(V_{4,4k+1}) = 0$ for 
$i \le 4k-4$, every stable $3$-frame
over $S^{2k-1} \vee S^{2k+1}$ is homotopic to the trivial stable
$3$-frame, provided that $k \ge 3$; and therefore a stable 
$3$-frame over $M \sharp S$ gives
rise to at least one stable $3$-frame over $M$.

In the case where $\chi(M) < 0$, we can prove the 
statement by a similar induction, considering 
$M \sharp (S^{2k} \times S^{2k})$ 
whose Euler characteristic is equal to $\chi(M) + 2$.
\end{proof}

\begin{remark} Our technique does not apply to manifolds of dimension $8$, since in the case $k=2$ there exists a non-trivial stable $3$-frame over $S^{2k-1}\vee S^{2k+1}$.
\end{remark}

In the case of a manifold $M$ of dimension $4k+2$, we make use of
a theorem which is due to Atiyah--Dupont \cite{AD} (in the
orientable case) and Koschorke \cite{Kos} (in the non-orientable
case).

\begin{theorem}\label{th:ADK} 
A manifold $M$ of dimension $n = 4k+2$, with $k > 0$ 
in the orientable case and $k > 1$ in the non-orientable case, 
admits a $3$-frame if and only if $\chi(M) = 0$ and $w_{n-2}(M) = 0$.
\end{theorem}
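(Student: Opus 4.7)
The plan is to combine standard obstruction theory for the fibration $V_{3,n}\to \BO_{n-3}\to \BO_n$ with explicit knowledge of the low homotopy groups of Stiefel manifolds in the range $n=4k+2$, and to use Theorem~\ref{EK} to reduce $\mathrm{span}(M)$ questions to $\mathrm{span}^0(M)$ questions (both are equal when $\chi(M)=0$ and $\dim M$ is even).

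Necessity is immediate: a $3$-frame produces a splitting $TM\cong\varepsilon^3\oplus\eta$, so $w_i(TM)=0$ for $i>n-3$; in particular $w_{n-2}(M)=0$. The first section of the frame is a nowhere-zero vector field, so Poincar\'e--Hopf forces $\chi(M)=0$.

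For sufficiency, I would try to lift the classifying map $M\to \BO_n$ of $TM$ through the fibration $\pi\co \BO_{n-3}\to \BO_n$, whose fiber $V_{3,n}$ is $(n-4)$-connected. The resulting obstructions lie in $H^{i}(M;\pi_{i-1}(V_{3,n}))$ (with local coefficients in the non-orientable case) for $i=n-2,n-1,n$. The primary obstruction in degree $n-2$ is the standard one: for $n=4k+2$, $\pi_{n-3}(V_{3,n})\cong\Z_2$, and the obstruction is identified with $w_{n-2}(M)$, which vanishes by hypothesis. The top obstruction in degree $n$ reduces, up to indeterminacy, to a (twisted) form of the Euler class and therefore vanishes once $\chi(M)=0$.

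The main obstacle is the intermediate secondary obstruction in $H^{n-1}(M;\pi_{n-2}(V_{3,n}))$. In the orientable case (Atiyah--Dupont) one invokes the Wu formula to see that $w_{n-1}(M)$ automatically vanishes on a closed orientable manifold and arranges the lift on the $(n-1)$-skeleton so that the degree $n-1$ obstruction is killed. In the non-orientable case one must work with twisted coefficients and twisted Wu relations; this is where the hypothesis $k>1$ (i.e.\ $n\geq 10$) enters, guaranteeing enough stable-range freedom in $\pi_{n-2}(V_{3,n})$ to absorb the indeterminacy. I would follow Koschorke by first invoking Theorem~\ref{EK} to replace $\mathrm{span}$ by $\mathrm{span}^0$, arguing stably in the Stiefel fibration $V_{4,n+1}\to\BO_{n-3}\to\BO_{n+1}$ where the coefficient groups are cleaner, and then descending back. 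The truly delicate point is controlling the interplay between the local coefficient system, the indeterminacy of the chosen lift over the $(n-2)$-skeleton, and the way this propagates to the top obstruction; this bookkeeping is what forces the dimension restrictions $k>0$ (orientable) and $k>1$ (non-orientable), and it is the step where I expect to spend most effort.
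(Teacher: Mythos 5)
First, note that the paper does not prove this statement: Theorem~\ref{th:ADK} is imported verbatim from Atiyah--Dupont \cite{AD} (orientable case) and Koschorke \cite{Kos} (non-orientable case), so there is no in-paper proof to compare yours against. Your necessity argument and your identification of the primary obstruction with $w_{n-2}(M)$ are correct, and the framework you set up --- lifting the classifying map $M\to\BO_n$ through the fibration $V_{3,n}\to\BO_{n-3}\to\BO_n$, with obstructions in degrees $n-2$, $n-1$ and $n$ --- is indeed the standard starting point.

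The genuine gap lies in the sufficiency direction, at precisely the two places you yourself flag as remaining work. The assertion that the top obstruction in $H^n(M;\pi_{n-1}(V_{3,n}))$ ``reduces, up to indeterminacy, to a (twisted) form of the Euler class'' cannot be taken for granted: $\pi_{n-1}(V_{3,n})$ is strictly larger than $\Z$, and the companion case $n=4k$ (Theorem~\ref{th:AD}), where the answer involves $\sigma(M)\equiv 0 \pmod 8$ in addition to $\chi(M)=0$ and $w_{n-2}(M)=0$, shows that the top obstruction to a $3$-frame genuinely refines the Euler class. Identifying that obstruction is the actual content of the Atiyah--Dupont theorem, and their proof proceeds via the index theory of families of real elliptic operators rather than by elementary obstruction theory; the secondary obstruction in degree $n-1$ and its indeterminacy likewise require their analysis (or, in the non-orientable case, Koschorke's singularity-theoretic methods). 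As written, your proposal translates the statement into obstruction-theoretic language, handles the easy primary obstruction, and then defers the decisive steps to the very references whose theorem is to be proved. That is acceptable if the intent is to cite the result --- which is exactly what the paper does --- but it does not constitute an independent proof.
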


In view of Theorem~\ref{th:ADK}, the argument similar 
to that in the proof of Theorem~\ref{th:4k} yields a condition 
for the existence of a fold map into $\R^4$ of a manifold 
of dimension $4k+2$ as follows.

\begin{theorem} 
A manifold $M$ of dimension $n = 4k+2$, with $k > 1$, admits a fold
map into $\R^4$ if and only if $w_{n-2}(M) = 0$.
\end{theorem}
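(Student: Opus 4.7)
The plan is to follow the induction-on-$|\chi(M)|$ argument used in the proof of Theorem~\ref{th:4k}, with Theorem~\ref{th:ADK} playing the role of Theorem~\ref{th:AD}; because the signature is undefined in dimension $4k+2$, no extra condition intervenes and the statement is cleaner. By Ando's Theorem~\ref{Ando} together with Corollary~\ref{span} (every fold map is tame since $n$ is even), the existence of a fold map $M\to\R^4$ is equivalent to $\span^0(M)\ge 3$. The vanishing of $w_{n-2}(M)$ is the primary obstruction to a stable $3$-frame on $M$, so its necessity is immediate; the aim is to establish sufficiency.

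The base case of the induction is $\chi(M)=0$. There Theorem~\ref{EK} yields $\span(M)=\span^0(M)$, and Theorem~\ref{th:ADK} (applicable because $k>1$) shows that $\span(M)\ge 3$ is equivalent to $w_{n-2}(M)=0$. For the inductive step, when $\chi(M)>0$ I would form $M\sharp T$ with $T=S^{2k+1}\times S^{2k+1}$, so that $\chi(M\sharp T)=\chi(M)-2$; when $\chi(M)<0$ I would use $T'=S^{2k}\times S^{2k+2}$, giving $\chi(M\sharp T')=\chi(M)+2$. Both $T$ and $T'$ are stably parallelizable, so $w_{n-2}$ is unchanged by these connected sums.

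The forward half of the inductive step is direct: if $M$ admits a fold map into $\R^4$, then so does $M\sharp T$ (paste in any fold map on $T$, which exists because $T$ is stably parallelizable), so the inductive hypothesis gives $w_{n-2}(M\sharp T)=w_{n-2}(M)=0$. For the converse, assuming $w_{n-2}(M)=0$, the inductive hypothesis applied to $M\sharp T$ produces a stable $3$-frame on $M\sharp T$, and the real task is to transfer it to $M$. Writing $M\sharp T=(M\setminus D^n)\cup(T\setminus D^n)$ and using the homotopy equivalence $T\setminus D^n\simeq S^a\vee S^b$ with $(a,b)=(2k+1,2k+1)$ or $(2k,2k+2)$, both satisfying $4\le a,b\le n-4$ when $k>1$, the obstruction to modifying the frame so that it becomes trivial over $T\setminus D^n$ (and hence extends across the disk glued back to $M\setminus D^n$) lies in $\pi_a(V_{4,n+1})\oplus\pi_b(V_{4,n+1})$. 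These groups vanish because $V_{4,n+1}$ is $(n-4)$-connected, completing the induction.

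The main obstacle is precisely this last range condition: one must arrange both sphere factors of the connected-sum summand to lie within the connectivity range of $V_{4,n+1}$. It is exactly this constraint that forces the hypothesis $k>1$, paralleling the role of $k>2$ in Theorem~\ref{th:4k} and the failure remarked upon after that theorem.
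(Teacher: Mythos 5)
Your proposal follows essentially the same route as the paper: the paper's proof of this theorem consists precisely of one preliminary observation plus the remark that ``the rest of the argument is similar to that in the proof of Theorem~\ref{th:4k}'', i.e.\ the induction on $\chi(M)$ via connected sums with products of spheres, with Theorem~\ref{th:ADK} and Theorem~\ref{EK} supplying the base case $\chi(M)=0$, and the vanishing of $\pi_i(V_{4,n+1})$ for $i\le n-4$ allowing the stable $3$-frame to be transferred across the connected sum. Your choice of summands $S^{2k+1}\times S^{2k+1}$ and $S^{2k}\times S^{2k+2}$, and your verification that both sphere dimensions lie in the connectivity range exactly when $k>1$, match what the paper intends.

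There is, however, one genuine gap: your induction changes $\chi$ by $\pm 2$ at each step and terminates only at $\chi(M)=0$, so as written it says nothing about manifolds of odd Euler characteristic --- for such $M$ the recursion never reaches the base case (from $\chi=1$ you pass to $\chi=-1$ and back). The paper closes this hole as the very first step of its proof: by the Wu formula, $\mathrm{Sq}^2(w_{4k})=w_2w_{4k}+w_{4k+2}$, so $w_{n-2}(M)=w_{4k}(M)=0$ forces $w_{4k+2}(M)=w_n(M)=0$, i.e.\ $\chi(M)$ is even; hence the sufficiency direction only ever needs to treat even $\chi$. (Your necessity direction is unaffected, since you correctly observe that $w_{n-2}$ is the primary obstruction to a stable $3$-frame, so a fold map forces $w_{n-2}(M)=0$ directly.) You should add this parity argument --- it is the one ingredient of the dimension-$4k+2$ case that is not simply copied from the proof of Theorem~\ref{th:4k}, where the odd-$\chi$ case is instead excluded via the signature and stable $3$-frame conditions.
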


\begin{proof}
First note that we have
$$Sq^2(w_{4k}) = w_2 w_{4k} + w_{4k+2}$$
by virtue of the Wu formula (for example,
see \cite{MS}).
Therefore, if $w_{4k}$ vanishes, then so does $w_{4k+2}$.
Then the rest of the argument is similar to that
in the proof of Theorem~\ref{th:4k}.
\end{proof}

\begin{remark} Our technique does not apply to manifolds of dimension $6$, since in the case $k=1$ there exists a non-trivial stable $3$-frame over $S^{2k+1}\vee S^{2k+1}$.
\end{remark}

\section{Fold maps of non-orientable
$4$-manifolds into $\R^3$}\label{43}

As has already been mentioned in \S\ref{Andohp},
the existence problem of a fold map into $\R^3$
for closed orientable $4$-manifolds
has been solved by the first and the second authors,
independently.
In this section, let us consider the existence
problem of fold maps into $\R^3$ for closed
non-orientable $4$-manifolds.

It immediately follows from Theorem~\ref{Ando} and
Corollary~\ref{th:3} that a closed
non-orientable $4$-manifold $M$ admits
a tame fold map $f: M \to \R^3$ if
$w_2(M) = 0$ and $w_4(M) = 0$.
For example,
$$M=\sharp^{2\ell} \R P^4 \sharp^k (S^2 \times S^2)
\sharp^m (S^1 \times S^3)$$
with $\ell \ge 1$, $k \ge 0$ and $m \ge 0$
admits a fold map into $\R^3$.
However, the same argument does not tell us if
$\sharp^{2\ell+1}\R P^4$ admits
a fold map into $\R^3$ or not.

Nevertheless, for the existence of tame
fold maps, we have the following.

\begin{theorem}\label{thm:43non-ori}
Let $M$ be a closed connected non-orientable $4$-manifold.
Then there exists a tame fold map
$f : M \to \R^3$ if and only if
$W_3(M) = 0$ in $H^3(M; \Z_{w_1(M)})$
and $w_4(M) = 0$ in $H^4(M; \Z_2)$, where
$\Z_{w_1(M)}$ denotes the orientation local
system of $M$ and $W_3$ denotes the $3$rd
Whitney class for twisted coefficients.
\end{theorem}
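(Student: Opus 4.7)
The plan is to combine Theorem~\ref{Ando} with Postnikov obstruction theory, parallel to the treatment of Theorem~\ref{th:2}. By Theorem~\ref{Ando}, a tame fold map $f : M \to \R^3$ exists if and only if the rank-$5$ bundle $\xi := TM \oplus \vep^1$ admits three everywhere linearly independent sections; equivalently, the classifying map $c_\xi : M \to \BO_5$ lifts through the fibration $\BO_2 \to \BO_5$ whose fiber is the Stiefel manifold $V_{3,5}$.

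Using the long exact sequence of $\mathrm{SO}(2) \to \mathrm{SO}(5) \to V_{3,5}$, one finds $\pi_0(V_{3,5}) = \pi_1(V_{3,5}) = 0$, $\pi_2(V_{3,5}) \cong \Z$, and $\pi_3(V_{3,5}) \cong \Z$. The action of $\pi_0(O(5)) \cong \Z_2$ on each of $\pi_2$ and $\pi_3$ is by sign (induced by conjugation of a reflection on the loops generating these groups), so via $c_\xi$ the associated local systems on $M$ are both isomorphic to $\Z_{w_1(\xi)} = \Z_{w_1(M)}$. The first Postnikov invariant of the fibration then produces a primary obstruction in $H^3(M; \Z_{w_1(M)})$; following the standard identification of primary frame obstructions with twisted Whitney classes, as used in the proof of Theorem~\ref{th:2} and explained in Remark~\ref{rem:Postnikov}, I would identify this class with $\pm W_3(\xi) = \pm W_3(M)$.

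Assuming $W_3(M) = 0$, a $3$-frame over $M^{[3]}$ exists, and the secondary obstruction lies a priori in $H^4(M; \Z_{w_1(M)}) \cong \Z$ (by Poincar\'e--Lefschetz duality for the non-orientable manifold $M$), modulo indeterminacy coming from different primary lifts. The plan is to show that this indeterminacy is exactly $2 \cdot H^4(M; \Z_{w_1(M)})$, so that the secondary obstruction descends to a well-defined class in $H^4(M; \Z_{w_1(M)})/2 \cong H^4(M; \Z_2)$ (the isomorphism coming from the coefficient sequence $0 \to \Z_{w_1(M)} \stackrel{\times 2}{\to} \Z_{w_1(M)} \to \Z_2 \to 0$ together with the vanishing of $H^5(M; \Z_{w_1(M)})$), and then to identify this reduced class with $w_4(\xi) = w_4(M)$. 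I expect to carry out both steps using the Serre spectral sequence for $\mathrm{SO}(2) \to \mathrm{SO}(5) \to V_{3,5}$ to compute the second $k$-invariant, together with the characteristic class computations of the Appendix that relate degree-$4$ integer classes on pin-type bundles to $w_4$.

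The principal obstacle will be this identification at the second Postnikov stage---pinning down both that the indeterminacy is precisely multiplication by $2$ and that the reduced secondary class coincides with $w_4(M)$. Once these are established, necessity of both conditions $W_3(M) = 0$ and $w_4(M) = 0$ is automatic, since any $3$-frame on $\xi$ forces both Postnikov obstructions to vanish on $M$, and sufficiency follows at once from the obstruction-theoretic lifting argument.
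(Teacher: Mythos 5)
Your overall strategy---Ando's theorem plus the Postnikov decomposition of the fibration $V_{3,5}\to\BO_2\to\BO_5$---is the same as the paper's, and your identification of the primary obstruction with $W_3(M)$ is correct. But there is a concrete error at the second stage: the action of $\pi_1(\BO_5)$ on $\pi_3(V_{3,5})\cong\Z$ is \emph{trivial}, not by sign; only the action on $\pi_2(V_{3,5})$ is twisted, which is exactly why the primary obstruction is the twisted class $W_3$ while the secondary one is not twisted. (One way to see this: $\pi_3(V_{3,5})\cong\pi_3(\mathrm{SO}_5)$ is in the stable range, where conjugation by a reflection acts trivially.) Consequently the secondary obstruction lives in the untwisted group $H^4(M;\Z)$, which for a closed non-orientable $4$-manifold is already isomorphic to $H^4(M;\Z_2)\cong\Z_2$ under reduction mod $2$---not in $H^4(M;\Z_{w_1(M)})\cong\Z$ as you claim. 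Your proposed mechanism of an integer-valued class with indeterminacy exactly $2\Z$ is therefore not the right picture; what actually has to be shown is that the mod $2$ reduction $\bar k$ of the integral second Postnikov invariant pulls back to $w_4(M)$ \emph{independently} of the choice of lift over the $3$-skeleton.

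That independence is the real content of the proof, and you defer it entirely (``I expect to carry out both steps\dots''). In the paper one writes $\alpha^*\bar k=(1\times x)+(at\times y)+(bt^2\times 1)$ for the action $\alpha\co K(\Z,2)\times E\to E$ of the fiber on the second stage $E$, shows $b=0$ by computing a transgression in the restricted fibration over $K(\Z,2)$, shows $x=r^*w_4$, and kills the cross term $at\times y$ by testing against $\R P^4$, where $W_3=0$, $w_4\neq 0$ and $\span^0(\R P^4)=0$ force the secondary obstruction to be nonzero; none of this follows from the Serre spectral sequence of $\mathrm{SO}_2\to \mathrm{SO}_5\to V_{3,5}$ alone. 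You may also want to note the paper's much more elementary alternative argument: $W_3(M)=\tilde\beta w_2(M)$, so its vanishing yields a $2$-plane bundle $\eta$ with $w_1(\eta)=w_1(M)$ and $w_2$-compatible Euler class, after which the vanishing of $w_4$ trivializes $\eta\oplus\nu_M$ and produces the required epimorphism $TM\oplus\vep^1\to\vep^3$ directly.
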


Using Theorem~\ref{thm:43non-ori}, we see that
$\sharp^{2\ell+1}\R P^4$ does not admit a
tame fold map into $\R^3$.

\begin{remark}  An argument similar to that in the proof of Theorem 4.3
cannot be used to deduce Theorem 5.1 from a result
on the span of $4$-manifolds (see Remark 5.4) as there exist
non-trivial $3$-frames in the trivial vector $5$-bundle over $S^1 \vee S^3$.
\end{remark}

\begin{proof}[Proof of Theorem~\textup{\ref{thm:43non-ori}}]
In view of Theorem~\ref{Ando},
we have only to show that the
stable span of $M$ satisfies
$\span^0(M) \geq 2$ if and only if
both $W_3(M)$ and $w_4(M)$ vanish.

Recall that
$\span^0(M) \ge 2$ if and only of
the map $t_M: M \to \mathrm{BO}_5$
classifying $TM \oplus \vep^1$ admits a lift with respect to
the fibration
$$
   V_{3,5} \longrightarrow \mathrm{BO}_2 \longrightarrow\mathrm{BO}_5.
$$
It is known that $\pi_2(V_{3,5}) \cong \pi_3(V_{3,5}) \cong \Z$
and that the action of $\pi_1(BO_5)$ on
$\pi_2(V_{3,5})$ is nontrivial.
Hence, the primary obstruction to the existence of such a lift is
the Whitney class $W_3(M) \in H^3(M; \Z_{w_1(M)})$
(for this and the following arguments,
the reader is referred to \cite{Pollina, Randall}).

Suppose that $W_3(M)=0$.
Then there is a lift over a $3$-skeleton of $M$.
Recall that the action of $\pi_1(BO_5)$ on
$\pi_3(V_{3,5})$ is trivial.
Let $E$ be the space in the second stage of the Postnikov decomposition
and $k \in H^4(E; \Z)=[E, K(\Z,4)]$ be the second Postnikov
invariant:
$$
\begin{CD}  V_{3,5} @>>> \mathrm{BO}_2 @. \\
            @.     @V q VV  @. \\
            K(\Z, 2) @>>> E @>k>> K(\Z,4)\\
            @.  @V r VV \\
            M@>t_M>> \mathrm{BO}_5 @>W_3>> L(\Z, 3).
\end{CD}
$$
Since $M$ is non-orientable, the modulo two reduction
$H^4(M; \Z) \to H^4(M; \Z_2)$ is an
isomorphism. Therefore, we have only to
consider the modulo two reduction
of the Postnikov invariant.

We see that the modulo two reduction 
$\bar{k}$ of $k$ coincides with $r^* w_4$.
Indeed, the modulo two reduction homomorphism
$H^4(E;\Z) \to H^4(E;\Z_2)$
is associated to the obvious homomorphism
$\pi_3(V_{3,5}) \cong \Z \to \pi_3(V_{2,5}) \cong \Z_2$
of coefficient groups.

To calculate the indeterminacy we need to consider the map
$$
  \alpha: K(\Z, 2) \times E \to E
$$
given by the action of the fiber $K = K(\Z,2)$ on the fibration
$r: E \to \mathrm{BO}_5$.
Then we can express the class $\alpha^*\bar{k}$ in the form
$$
  \alpha^*\bar{k} = (1 \times x)+(at \times y)+(bt^2 \times 1)
$$
for some $a, b \in \Z_2$ and for
some $x \in H^4(E; \Z_2)$ and $y \in H^2(E; \Z_2)$,
where $t \in H^2(K; \Z_2) = \Z_2$
is the generator.

We know that the inclusion $K \to E$ in the fibration
$r: E \to \mathrm{BO}_5$ takes $k$ to the 
Postnikov invariant of the fibration
$V_{3,5} \to K$. Let $F$ denote the fiber
of this fibration. Since $F$ is
$2$-connected, we have the
Serre long exact sequence
$$
H^3(K; \Z_2) \to
H^3(V_{3,5}; \Z_2) \to H^3(F; \Z_2) \to H^4(K; \Z_2),
$$
where the map $H^3(F; \Z_2) \to H^4(K; \Z_2)$ 
corresponds to the transgression we
are interested in, and the other maps are the homomorphisms
induced by the inclusion of the fiber, 
and the fiber bundle projection.
It is easy to see that the
above sequence has the form $0 \to \Z_2 \to
\Z_2 \to \Z_2$, which implies that
the modulo two Postnikov invariant of the fibration
$V_{3,5} \to K$ is trivial.
Hence, 
considering the composition
$K \to K \times E \to E$, we conclude that $b=0$.

Next, by considering the composition $E \to K \times E \to E$
we conclude that $x=r^*w_4$.

Finally, let us determine $a$ and $y$. In fact, we will show that the term $at\times y$ is trivial. Since the homotopy fiber of $q$ is $2$-connected, the map $q$ induces an isomorphism of integral cohomology groups in dimension two. Hence $H^2(E; \Z)$ contains only one non-zero element, namely $(q^*)^{-1}(\beta w_1)$, which coincides with $r^*(\beta w_1)$. Since $y$ is the modulo two reduction of some class in $H^2(E; \Z)$, it follows that $y$ is a multiple of $r^*(w_1^2)$. If $y$ is zero, then $at\times y$ is trivial as it has been claimed. 

Suppose that $y$ is not zero, i.e., $y=r^*(w_1^2)$. 

Let us consider the non-orientable $4$-manifold $\R P^4$,
which satisfies $W_3(\R P^4) = 0$ and $w_4(\R P^4) \neq 0$.
In particular, we have $\span^0(\R P^4) = 0$ and hence
the secondary obstruction for $\R P^4$ must be nontrivial.
Let $\tilde{t} : \R P^4 \to E$ be a lift of $t_{\R P^4}$.
For a map $s : \R P^4 \to K$, we have
$$(s, \tilde{t})^*\alpha^*\bar{k} = 
w_4(\R P^4) + a s^*t \smile w_1(\R P^4)^2,$$
where $(s, \tilde{t}) : \R P^4 \to K \times E$
is the map that sends
$p \in \R P^4$ to $(s(p), \tilde{t}(p))$. 
On the other hand, for the map $g: \R P^4 \to K$ 
corresponding to
$\beta w_1(\R P^4) \in H^2(\R P^4; \Z)$,
we have $g^*t=w_1(\R P^4)^2$.
Since it holds that $w_1^4=w_4$ for $\R P^4$ (see, for example, 
\cite{MS}), if $a \neq 0$, then this implies that the secondary
obstruction vanishes, which 
is a contradiction. Thus we have $a = 0$.
Therefore, $\alpha^*\bar{k} = 1 \times r^*w_4$ and
we conclude that $w_4$ is the secondary obstruction.
\end{proof}

\begin{remark}\label{rem:simple43}
We can also prove Theorem~\ref{thm:43non-ori} in
a simpler way as follows. Let us consider
the exact sequence 
$$H^2(M; \Z_{w_1(M)}) \stackrel{\times 2}{\longrightarrow}
H^2(M; \Z_{w_1(M)}) \spmapright{\mathrm{mod}\ 2} 
H^2(M; \Z_2) \stackrel{\tilde{\beta}}{\longrightarrow} 
H^3(M; \Z_{w_1(M)})$$
associated with the coefficient
exact sequence
$$0 \longrightarrow \Z_{w_1(M)} \stackrel{\times 2}
\longrightarrow 
\Z_{w_1(M)}
\longrightarrow \Z_2 \longrightarrow 0.$$
If there exists a tame fold map
$f : M \to \R^3$, then the singular set
$S(f)$ is an orientable surface.
Since the Poincar\'e dual to the $\Z_2$-homology
class represented by $S(f)$ coincides with
$w_2(M)$ (see \cite{Thom2}), this
implies $W_3(M) = \tilde{\beta} w_2(M) = 0$
(see \cite{Cadek}).
Furthermore, it is known that the Euler
characteristic of $M$ has the same
parity as that of $S(f)$ (see \cite{Fukuda1}).
Therefore, both $W_3(M)$ and $w_4(M)$ vanish.

Conversely, suppose that both $W_3(M)$ and $w_4(M)$ vanish.
Since $W_3(M) = \tilde{\beta} w_2(M) = 0$,
there exists a cohomology class $e \in H^2(M; 
\Z_{w_1(M)})$ whose modulo two reduction
coincides with $w_2(M)$. Then we see easily that
there exists a $2$-plane
bundle $\eta$ over $M$ with $w_1(\eta) = w_1(M)$
such that its Euler class $e(\eta)
\in H^2(M; \Z_{w_1(M)})$ coincides with $e$.

In order to show that there exists
a fiberwise epimorphism $TM \oplus \vep^1
\to \vep^3$, it suffices to show that
$TM \oplus \vep^1 \cong \eta \oplus \vep^3$.
For this, we have only to show that
$\xi = \eta \oplus \nu_M$ is trivial, where
$\nu_M$ is the normal bundle of an embedding
of $M$ into a Euclidean space of sufficiently
big dimension.

By our construction, it is easy to see
that both $w_1(\xi)$ and $w_2(\xi)$ vanish.
Therefore, $\xi$ has a trivialization
over the $3$-skeleton of $M$.
It is known (see \cite[p.~2]{James}) that the 
principal $\mathrm{SO}_5$-bundle over $M$ associated to 
$\xi$ is simple. Hence, the obstruction 
to extending the trivialization over the whole of $M$ is an
element in $H^4(M; \pi_3(\mathrm{SO}_5))$, which, in view of the
isomorphism
\[
   H^4(M; \pi_3(\mathrm{SO}_5)) \longrightarrow H^4(M; \pi_3(V_{2,5})),
\]
can be interpreted as the obstruction to the existence of a
$2$-frame for $\xi$. In other words, the obstruction coincides with
$w_4(\xi) = w_4(M)$.

Alternatively, we can use a result of \cite{DW}
to show that $\xi$ is trivial.
\end{remark}

\begin{corollary}
Let $g : M \to \R^3$ be a stable map
of a closed connected $4$-manifold $M$
such that the singular set $S(g)$ is
orientable. Then there exists
a tame fold map of $M$ into $\R^3$.
\end{corollary}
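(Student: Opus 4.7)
The plan is to verify the cohomological conditions characterising the existence of a tame fold map $M \to \R^3$ directly from the data supplied by the stable map $g$. When $M$ is non-orientable I apply Theorem~\ref{thm:43non-ori} and must check $W_3(M) = 0$ and $w_4(M) = 0$; when $M$ is orientable I apply the analogous result of \cite{Sadykov2, Sae4}, for which only the Euler-characteristic parity $\chi(M) \equiv 0 \pmod 2$ is at issue. Both cases reduce to the two steps below.

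For a stable map $g : M^4 \to \R^3$ only corank-one singularities are generic, so $S(g) = \Sigma^1(g)$ is a closed $2$-dimensional smooth submanifold of $M$ (containing the cusp and swallowtail strata). As recalled in Remark~\ref{rem:simple43}, Thom's theorem identifies
\[
   \mathrm{PD}[S(g)]_2 \;=\; w_2(M) \in H^2(M; \Z_2).
\]
The first condition, $W_3(M) = 0$ (non-orientable case), will be obtained by lifting $w_2(M)$ to twisted integral coefficients. Namely, orientability of $S(g)$ produces an integral fundamental class $[S(g)]_\Z \in H_2(S(g); \Z)$, whose pushforward to $H_2(M; \Z)$ corresponds, under twisted Poincar\'e duality $H_2(M; \Z) \cong H^2(M; \Z_{w_1(M)})$, to a class whose mod-$2$ reduction is $w_2(M)$. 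The Bockstein sequence of Remark~\ref{rem:simple43} then yields $W_3(M) = \tilde\beta w_2(M) = 0$.

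The second condition $w_4(M) = 0$ will be derived from Fukuda's Euler-characteristic formula \cite{Fukuda1}, which gives $\chi(M) \equiv \chi(S(g)) \pmod 2$ for stable maps $M^4 \to \R^3$. Since $S(g)$ is a closed orientable surface, $\chi(S(g)) = 2-2g$ is even, so $\chi(M)$ is even; because $M$ is closed and connected and $\langle w_4(M),[M]_2\rangle \equiv \chi(M)\pmod 2$, this forces $w_4(M) = 0$.

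Applying Theorem~\ref{thm:43non-ori} (or the Sadykov--Saeki result in the orientable case) now produces the desired tame fold map. The main step requiring care is the first one: bridging the intrinsic orientability of $S(g)$ with the existence of a twisted integral lift of $w_2(M)$ on a possibly non-orientable ambient $M$. This relies on the naturality of Poincar\'e duality with twisted coefficients under the reduction $\Z_{w_1(M)} \to \Z_2$, which is the point where the orientability hypothesis on $S(g)$ is actually exploited.
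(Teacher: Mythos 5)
Your proposal is correct and follows essentially the same route as the paper: the paper's proof simply invokes the argument of Remark~\ref{rem:simple43} (orientability of $S(g)$ gives a twisted integral lift of $w_2(M)=\mathrm{PD}[S(g)]_2$, hence $W_3(M)=\tilde\beta w_2(M)=0$, and Fukuda's parity formula gives $w_4(M)=0$) and then applies Theorem~\ref{thm:43non-ori}. Your treatment is just a more explicit write-up of the same two steps, including the orientable case via \cite{Sadykov2, Sae4}.
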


\begin{proof}
By an argument as in Remark~\ref{rem:simple43}, we see
that both $W_3(M)$ and $w_4(M)$ vanish.
Then the result follows from Theorem~\ref{thm:43non-ori}.
\end{proof}

\begin{remark}
By Randall \cite[Corollary~3.7]{Randall} 
(see also \cite{Kos, Pollina}), 
a closed connected non-orientable $4$-manifold $M$
satisfies $\span(M) \geq 2$ if and only if $W_3(M) = 0$ and
its Euler characteristic vanishes. 
\end{remark}

\begin{remark}
In \cite{Sae1}, the second author
constructed a fold map $f : M \to \R^3$
of the total space $M$ of an $\R P^2$-bundle
over $\R P^2$. Note that $w_4(M)$ does
not vanish and that this fold map is not tame.
This means that the vanishing of $w_4$ is not
necessary for the existence of a fold map.
\end{remark}

\begin{remark}
Suppose that a closed non-orientable
$4$-manifold $M$ admits a fold map into $\R^4$.
Then we have $\mathrm{span}^0(M) \geq 3$, and
therefore $M$ admits a tame fold map into
$\R^3$ by Corollary~\ref{span}.
This observation is easily seen to be
consistent with Corollary~\ref{th:3} and
Theorem~\ref{thm:43non-ori}.
\end{remark}

\begin{remark}
We do not know if $W_3$ or $w_4$ can be
interpreted as a certain Thom polynomial
for some singularities.
\end{remark}

As to fold maps of $6$-dimensional manifolds
into $\R^3$, we have the following.

\begin{theorem}\label{thm:63}
Let $M$ be a closed $6$-dimensional manifold.
If $M$ is orientable, then there always
exists a tame fold map $f : M \to \R^3$.
If $M$ is non-orientable and $W_5(M) = 0$
in $H^5(M; \Z_{w_1(M)})$, then
there exists a tame fold map $f : M \to \R^3$.
\end{theorem}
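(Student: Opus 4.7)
The plan is to apply Ando's Theorem~\ref{Ando} and the lemma that follows it to reduce the existence of a tame fold map $f : M \to \R^3$ to the condition $\span^0(M) \geq 2$, equivalently the liftability of the classifying map $t_M : M \to \BO_7$ of $TM \oplus \vep^1$ along the fibration
$$V_{3,7} \longrightarrow \BO_4 \longrightarrow \BO_7.$$
I would analyze this lifting problem via the Postnikov decomposition, following the model set by the proofs of Theorems~\ref{th:1} and \ref{thm:43non-ori}.

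Since $V_{3,7}$ is $3$-connected with $\pi_4(V_{3,7}) \cong \Z$ (on which $\pi_1(\BO_7)$ acts by the orientation character), the primary obstruction lies in $H^5(M; \Z_{w_1(M)})$ and is the integral Whitney class $W_5(M)$. In the non-orientable case this is zero by hypothesis. In the orientable case one has $W_5(M) = \beta w_4(M)$, and the Wu formula $w = \mathrm{Sq}(v)$ on $M^6$ (with $v_i = 0$ for $i > 3$ and, since $w_1 = 0$, $v_2 = w_2$, $v_3 = w_3$) yields
$$w_4(M) = \mathrm{Sq}^{2} w_2 + \mathrm{Sq}^{1} w_3 = w_2(M)^2,$$
using the universal identities $\mathrm{Sq}^{2} w_2 = w_2^2$ and $\mathrm{Sq}^{1} w_3 = w_1 w_3 = 0$. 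As $p_1 \equiv w_2^{\,2} \pmod 2$ holds for every oriented vector bundle, $w_4(M)$ is the mod $2$ reduction of the integral class $p_1(M) \in H^{4}(M; \Z)$, and therefore $W_5(M) = \beta w_4(M) = 0$ vanishes automatically.

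Once the primary obstruction is eliminated, the next (and by $\dim M = 6$ the last) obstruction lies in $H^6(M; \pi_5(V_{3,7}))$, with the appropriate twist in the non-orientable case; the long exact sequence of $\mathrm{SO}_4 \to \mathrm{SO}_7 \to V_{3,7}$ together with Bott periodicity gives $\pi_5(V_{3,7}) \cong \Z_2 \oplus \Z_2$. Following the identification procedure illustrated in the proof of Theorem~\ref{thm:43non-ori} (with the action-map argument on the fiber $K(\pi_4, 4)$ of the first Postnikov stage), one exhibits the secondary obstruction as a universal characteristic class on $M$ and verifies its vanishing by means of Wu-formula identities on $M^6$ (such as $w_6 = w_3^{\,2}$ in the orientable case, and their twisted analogues in the non-orientable case). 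The main technical obstacle is this explicit identification and the check that the universal class vanishes under the stated hypotheses; once carried out, the desired lift of $t_M$ to $\BO_4$ exists and yields the tame fold map $f : M \to \R^3$.
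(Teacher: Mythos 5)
Your reduction to $\mathrm{span}^0(M)\ge 2$ and to the lifting problem for $V_{3,7}\to \BO_4\to \BO_7$ is correct, and your treatment of the primary obstruction is sound: it does lie in $H^5(M;\Z_{w_1(M)})$ and equals $W_5(M)$, and in the orientable case the Wu-formula computation $w_4(M)=\mathrm{Sq}^2 v_2=w_2(M)^2$ together with $p_1\equiv w_2^2 \pmod 2$ does force $W_5(M)=\beta w_4(M)=0$. (Minor slip: for $w_1=0$ one has $v_3=w_1w_2=0$, not $v_3=w_3$, but this does not affect the outcome.) The genuine gap is the secondary obstruction. You correctly locate it in $H^6(M;\pi_5(V_{3,7}))$ with $\pi_5(V_{3,7})\cong\Z_2\oplus\Z_2$, but you then declare that "the main technical obstacle is this explicit identification and the check that the universal class vanishes... once carried out." That step is the entire content of the theorem beyond the primary obstruction, and nothing in your proposal makes it plausible that it goes through: the coefficient group is not cyclic, the $\pi_1(\BO_7)$-action on $\pi_5(V_{3,7})\cong\pi_4(\mathrm{SO}_4)$ must be determined, the class is only defined up to an indeterminacy coming from the choice of lift over the $5$-skeleton (compare the delicate $\alpha^*\bar{k}$ analysis needed in the proof of Theorem~\ref{thm:43non-ori} even for a cyclic coefficient group), and no candidate characteristic-class formula is produced. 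Guessing that "Wu-formula identities such as $w_6=w_3^2$" will kill it is not a proof.

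The paper sidesteps all of this. Its proof of Theorem~\ref{thm:63} is two lines: by Remark~\ref{rem:s1}, $\mathrm{span}^0(M)=\mathrm{span}(M\times S^1)-1$, and Randall's computation of the span of $7$-manifolds \cite{Randall} gives $\mathrm{span}(M\times S^1)\ge 3$ unconditionally in the orientable case and under $W_5(M\times S^1)=0$ in the non-orientable case; Corollary~\ref{span} then yields the tame fold map. In effect the paper outsources exactly the higher-obstruction computation you left open to a known result about vector fields on $7$-manifolds. If you want a self-contained obstruction-theoretic proof along your lines, you must either carry out the $H^6$ computation in full or, as the paper does, quote a reference where it has been done.
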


\begin{proof}
When $M$ is orientable,
according to \cite{Randall}, the span of the $7$-dimensional
manifold $M \times S^1$ satisfies
$\mathrm{span}(M \times S^1) \geq 3$.
This implies $\mathrm{span}^0(M) \geq 2$
by Remark~\ref{rem:s1}. Therefore, by
Corollary~\ref{span}, we have the desired conclusion.

When $M$ is non-orientable, $\mathrm{span}(M
\times S^1) \geq 3$ if $W_5(M \times S^1)$
vanishes. Thus we have the desired conclusion.
\end{proof}

\begin{remark}
In \cite{Sak3}, a slightly weaker
version of the above theorem is given.
\end{remark}

\begin{remark}
For $n \ge 5$,
the existence problem of fold maps into
$\R^3$ for closed $n$-dimensional manifolds has been
almost completely solved by the first and the third authors.
In \cite{Sak3}, when $n$ is odd,
it is proved
that a closed $n$-dimensional manifold
$M$ admits a fold map into $\R^3$ if and only if $w_{n-1}(M)=0$.
Note that $w_{n-1}(M)$ coincides with 
the Thom polynomial for cusp singularities.
On the other hand, in \cite{Sadykov4}, it is proved
that a closed orientable $n$-dimensional
manifold always admits
a fold map into $\R^3$ when $n$ is even and $n \ge 8$.
For $n = 6$, see Theorem~\ref{thm:63}.
\end{remark}

\section{Appendix}

In this section we prove claims made in the beginning of \S\ref{secondary} and in Remark~\ref{rem:Postnikov}. These assertions might be folklore, but as we could not find them in the literature, we decided to include them in this paper as Appendix. 

In this section we continue to use notations introduced in Remark~\ref{rem:Postnikov}.

To begin with let us identify the first two Postnikov invariants of the obvious fibration $p$: 
\[
    V_{4,5}\longrightarrow \BSO_1 \stackrel{p}\longrightarrow
    \BSO_5.
\]
The first one is known to be the classical obstruction class
$w_2$. Consequently, the space in the second stage of the
Postnikov decomposition of $p$ is nothing but $\BSPIN_5$. The next
nontrivial Postnikov class is in degree $4$ as
$\pi_2(V_{4,5})=0$, while $\pi_3(V_{4,5})\cong\Z$. Since
$\tilde H^*(\BSO_1; \Z)=0$, we
conclude that the second Postnikov invariant is a generator
$\tilde z$ of $H^4(\BSPIN_5; \Z)\cong\Z$, which is known to coincide up
to sign with half of the first Pontrjagin class of the universal
spin bundle~\cite{Th}. Let us also observe that since $\tilde z$
is not divisible by two, its reduction modulo $2$ is the unique
non-trivial element $w_4$ in $H^4(\BSPIN_5; \Z_2)$.

\begin{lemma} The group $H^4(\BPIN_5; \Z)$ is isomorphic to $\Z\oplus \Z_2$.
It is generated by a class $z$ with $r^*p_1=2z$, and
$y=r^*(\beta w_1)^2$. The transgression of $q$ takes the
fundamental class of $F$ to $\pm z$.
\end{lemma}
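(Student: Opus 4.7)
I would compute $H^4(\BPIN_5;\Z)$ via the integral Serre spectral sequence of the fibration
\[
\BSPIN_5\longrightarrow\BPIN_5\longrightarrow K(\Z_2,1)
\]
coming from the short exact sequence $1\to\mathrm{Spin}_5\to\mathrm{Pin}_5\to\Z_2\to 1$, whose classifying map $\BPIN_5\to K(\Z_2,1)$ factors as $r$ followed by $w_1\colon\BO_5\to K(\Z_2,1)$. From the preliminary discussion above the lemma, the fibre $\BSPIN_5$ is $3$-connected with $H^4(\BSPIN_5;\Z)=\Z\{\tilde z\}$. Thus the only $E_2$-entries in total degree $4$ are $E_2^{0,4}\cong\Z\{\tilde z\}$ and $E_2^{4,0}\cong H^4(K(\Z_2,1);\Z)\cong\Z_2$. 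The monodromy action on $\tilde z$ is trivial, for otherwise $p_1=2\tilde z$ would satisfy $p_1=-p_1$ on the base $\BO_5$, contradicting its infinite order; and all potentially nontrivial outgoing differentials from these entries land in odd-degree cohomology of $K(\Z_2,1)$ and so vanish. The resulting extension $0\to\Z_2\to H^4(\BPIN_5;\Z)\to\Z\to 0$ splits because the quotient is free, yielding $H^4(\BPIN_5;\Z)\cong\Z\oplus\Z_2$.

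Next I would identify the generators. The $\Z_2$-summand is the edge piece $F^4=H^4(K(\Z_2,1);\Z)$ pulled back to $\BPIN_5$; since the classifying map factors through $w_1\circ r$, this generator is $r^*(\beta w_1)^2=y$. For the $\Z$-summand, choose any lift $z\in H^4(\BPIN_5;\Z)$ of $\tilde z$; the ambiguity $z\mapsto z+y$ does not affect $2z$ because $y$ is $2$-torsion. Since $r^*p_1-2z$ restricts to $p_1|_{\BSPIN_5}-2\tilde z=0$, it lies in $\Z_2\{y\}$. To rule out $r^*p_1-2z=y$, I would reduce modulo two: $\overline{r^*p_1}=r^*(w_2^2)=0$ because pin bundles satisfy $w_2=0$, whereas $\bar y=r^*(w_1^4)\ne 0$ as one sees by pulling back along the classifying map $\R P^\infty\to\BPIN_5$ of $\eta\oplus\vep^4$ (where $\eta$ is the tautological line bundle, for which $w_1^4\ne 0$). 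Therefore $r^*p_1=2z$.

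For the transgression claim, I would exploit naturality in the pullback square
\[
\begin{CD}
\BSPIN_5 @>>> \BPIN_5\\
@VVV @VVrV\\
\BSO_5 @>>> \BO_5
\end{CD}
\]
which induces a comparison of Postnikov towers from that of $\pi\colon\BO_1\to\BO_5$ to that of $p\colon\BSO_1\to\BSO_5$. The second Postnikov invariant of $\pi$ lives in $H^4(\BPIN_5;\Z)$, coincides with the transgression of the fundamental class of $F$, and its restriction to $\BSPIN_5$ is the second Postnikov invariant of $p$, which equals $\pm\tilde z$ as already established. Hence the transgression lies in the preimage $\pm z+\Z_2\{y\}$ of $\pm\tilde z$; normalising the lift $z$ by the additional condition $\bar z=r^*w_4$ (the mod-$2$ primary obstruction) fixes the transgression to $\pm z$.

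\textbf{Main obstacle.} The subtlest step is the extension argument forcing the exact identity $r^*p_1=2z$ rather than $2z+y$. This reduces to the nonvanishing of $r^*w_1^4$ in $H^4(\BPIN_5;\Z_2)$, verified by a concrete pin line bundle on $\R P^\infty$; it is also the only place where the pin (as opposed to spin) structure plays a genuinely nontrivial role.
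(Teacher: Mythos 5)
Your proposal is correct in substance but takes a genuinely different route from the paper's. The paper works with the fibration $K(\Z_2,1)\to\BPIN_5\stackrel{r}{\to}\BO_5$, importing the known structure of $H^4(\BO_5;\Z)\cong\Z\oplus\Z_2\oplus\Z_2$ (Brown, Feshbach) to produce an element of infinite order ($r^*p_1$, detected rationally) and an element of order two ($y$, detected by $q^*$), and then runs the Serre exact sequence of $F\to\BO_1\stackrel{q}{\to}\BPIN_5$ to determine the group and to \emph{define} $z$ as the transgression image; the coefficient $2$ in $r^*p_1=2z$ comes only at the end, by comparison with the oriented tower. You instead use $\BSPIN_5\to\BPIN_5\to K(\Z_2,1)$, which needs nothing beyond the $3$-connectivity of $\BSPIN_5$, the computation $H^4(\BSPIN_5;\Z)=\Z\{\tilde z\}$ already made before the lemma, and the integral cohomology of $\R P^\infty$; this yields the group structure and $r^*p_1=2z$ more cheaply (no input on $H^4(\BO_5;\Z)$), at the cost of having to identify the transgression afterwards. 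Your detection of $r^*w_1^4\neq 0$ via $\eta\oplus\vep^4$ over $\R P^\infty$ is the same check the paper performs with $q^*$, since $q$ is precisely the classifying map of that pin bundle. Both routes use the same Postnikov-tower comparison of $\BSO_1\to\BSO_5$ with $\BO_1\to\BO_5$.

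One step should be repaired. You fix the ambiguity $z\mapsto z+y$ by imposing $\bar z=r^*w_4$ and then assert that the transgression equals $\pm z$; but to know that this normalisation selects the transgression class you would need to know that the transgression reduces mod $2$ to $r^*w_4$, which is exactly the content of the \emph{next} lemma in the paper and is not established in your argument. The fix is immediate and makes the normalisation unnecessary: you have shown that the transgression $k$ satisfies $k=\pm z_0+\epsilon y$ for your chosen lift $z_0$ and some $\epsilon\in\Z_2$, hence $2(\pm k)=2z_0=r^*p_1$ because $y$ is $2$-torsion; so $\pm k$ is itself a generator satisfying the required relation, and one simply takes $z:=\pm k$ in the statement. (Also, the vanishing of $d_2,d_3,d_4$ out of $E_*^{0,4}$ is due to the vanishing of $H^1$, $H^2$, $H^3$ of the fibre rather than to the parity of the base degree; only the $d_5$ target is the odd-degree group $H^5(K(\Z_2,1);\Z)=0$.)
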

\begin{proof} Let us consider the fibration $K(\Z_2,1)\to
\BPIN_5\stackrel{r}\longrightarrow \BO_5$. It is known \cite{Br},
\cite{Fe} that the group $H^4(\BO_5; \Z)\cong\Z\oplus\Z_2\oplus\Z_2$
is generated by $p_1, (\beta w_1)^2$ and $\beta(w_1w_2)$. Since $\tilde{H}^*(K(\Z_2,1); \Q)=0$, the Serre exact sequence implies that $r$ induces an isomorphism of rational cohomology groups
\[
      H^i(\BPIN_5; \Q)\cong H^i(\BO_5; \Q)
\]
for all $i$. In particular, 
the class $x=r^*p_1$ in $H^4(\BPIN_5; \Z)$ is of infinite order. On the other
hand, the map $q^*$ takes $y=r^*(\beta w_1)^2$ to $(\beta
w_1)^2\in H^4(\BO_1; \Z)$. Hence $y\ne 0$. Consequently,
$H^4(\BPIN_5; \Z)$ contains an element of infinite order and an
element of order two.

Let us consider, now, the fibration $F\to \BO_1\stackrel{q}\longrightarrow \BPIN_5$. We have
$H^1(F; \Z)=H^2(F; \Z)=0$, $H^3(F; \Z)\cong\Z$, $H^4(\BO_1; \Z)\cong\Z_2$ and the exact sequence
\begin{equation}\label{eq:6}
0 \longrightarrow H^{3}(F; \Z) \stackrel{\tau}\longrightarrow
H^4(\BPIN_5; \Z) \longrightarrow E_{\infty}^{4,0} \longrightarrow
0,
\end{equation}
where $\tau$ is the transgression. Let us recall that $H^3(F; \Z)\cong\Z$ and the group $H^4(\BPIN_5;
\Z)$ contains an element of infinite order and an element of order
two. Hence, from the exact sequence~(\ref{eq:6}), it follows that the
group $E_{\infty}^{4,0}$ is not zero. On the other hand, 
we observe that the group $E_{\infty}^{4,0}$ is a term of the Serre spectral sequence associated with the fibration $q$. Consequently, by the Serre Theorem (e.g, see \cite[Theorem 1.14(b)]{Hatch}),  the group $E_{\infty}^{4,0}$ is isomorphic to a subgroup
of $H^4(\BO_1; \Z)\cong\Z_2$. Hence $E_{\infty}^{4,0}\cong\Z_2$.
Since $H^4(\BPIN_5; \Z)$ contains an element of order two, the
exact sequence~(\ref{eq:6}) implies now that $H^4(\BPIN_5; \Z)\cong \Z\oplus
\Z_2$ with the first factor generated by the image $z$ of the fundamental class of $F$ by the
transgression. Let $y$ stand for $r^*(\beta w_1)^2$, which is the
unique element of order two in $H^4(\BPIN_5; \Z)$. Then, for some
integer $n$, the class $r^*p_1$ has the form $nz+\alpha y$, where
$\alpha$ equals $0$ or $1$. The classes $r^*p_1$ and $z$ are in
the kernel of the homomorphism $q^*$, while $q^*y$ is the
generator of $H^4(\BO_1; \Z)$. Hence $\alpha=0$ and $r^*p_1$ is a
multiple of $z$.

There is a map of the Postnikov decomposition of $\BSO_1\to \BSO_5$
into the Postnikov decomposition of $\BO_1\to \BO_5$. Such a map
takes the class $z$ to the class $\tilde z$. Hence $r^*p_1=\pm
2z$ (see the argument just before Lemma 6.1). If necessary, we may substitute $z$ by $-z$ so that we have $2z=
r^*p_1$.
\end{proof}

Next we determine the reduction modulo two of the class $z$.

\begin{lemma} The homomorphism of cohomology groups associated with the
homomorphism $\Z\stackrel{\mathrm{mod}\ 2}\longrightarrow \Z_2$
takes $z$ to the class $w_4(r^*\gamma)$, where $\gamma$ is the
universal $5$-plane bundle over $\BO_5$.
\end{lemma}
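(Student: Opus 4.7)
The plan combines naturality of the Postnikov construction along the double cover $i\colon\BSPIN_5\to\BPIN_5$ with a small kernel computation, and then discriminates the remaining ambiguity on a concrete pin bundle. The inclusion $\mathrm{Spin}(5)\hookrightarrow\mathrm{Pin}(5)$ induces a morphism from the Postnikov tower of $V_{4,5}\to\BSO_1\to\BSO_5$ to that of $V_{4,5}\to\BO_1\to\BO_5$, yielding $i\colon\BSPIN_5\to\BPIN_5$ along which the Postnikov invariants are compatible: $i^*z=\pm\tilde z$ in $H^4(\BSPIN_5;\Z)$. Reducing modulo~$2$ and invoking the identification $\tilde z\equiv w_4\pmod 2$ recorded in the first paragraph of the Appendix, where $w_4$ is the fourth Stiefel--Whitney class of the universal spin $5$-bundle, i.e.\ $i^*w_4(r^*\gamma)$, one obtains
\[
    z\bmod 2\;-\;w_4(r^*\gamma)\;\in\;\ker i^*\subset H^4(\BPIN_5;\Z_2).
\]

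Next, I will identify this kernel. Since $\pi_1(\BPIN_5)\cong\Z_2$ while $\pi_1(\BSPIN_5)=0$, the map $i$ is the universal double cover of $\BPIN_5$, classified by $w_1(r^*\gamma)$. The associated Gysin sequence
\[
   \cdots\to H^3(\BPIN_5;\Z_2)\xrightarrow{\cup\,w_1(r^*\gamma)} H^4(\BPIN_5;\Z_2)\xrightarrow{i^*}H^4(\BSPIN_5;\Z_2)\to\cdots
\]
identifies $\ker i^*$ in degree~$4$ with the image of cup product by $w_1(r^*\gamma)$. By the fact recalled in~\S\ref{secondary} that $H^3(\BPIN;\Z_2)\cong\Z_2$ is generated by $w_1^3$ (and by its stabilisation to $\BPIN_5$), this kernel is contained in $\{0,\,w_1(r^*\gamma)^4\}$. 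Consequently, $z\bmod 2$ is either $w_4(r^*\gamma)$ or $w_4(r^*\gamma)+w_1(r^*\gamma)^4$.

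Finally, to distinguish the two candidates I will pull back to the pin $5$-plane bundle $\xi=T\R P^4\oplus\vep^1\cong 5L$ over $\R P^4$, where $L$ is the tautological line bundle; the pin structure exists since $w_2(T\R P^4)=\binom{5}{2}w_1(L)^2=0$. A direct Stiefel--Whitney computation gives $w_4(\xi)=w_1(L)^4=w_1(\xi)^4$, so the first candidate pulls back to the generator of $H^4(\R P^4;\Z_2)\cong\Z_2$ while the second pulls back to $0$. On the other hand, $z(\xi)\in H^4(\R P^4;\Z)\cong\Z_2$ is itself the generator: a $4$-frame on $5L$ would force $5L\cong L\oplus\vep^4$, and hence $(1+t)^4=1$ in $H^*(\R P^4;\Z_2)$, contradicting $(1+t)^4=1+t^4\ne 1$. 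Therefore $z\bmod 2$ pulls back to the generator, which excludes the second candidate and forces $z\bmod 2=w_4(r^*\gamma)$. The main obstacle is precisely this final step: because $\ker i^*$ in degree~$4$ is genuinely nontrivial, naturality together with the spin-case calculation cannot by itself pin down $z\bmod 2$, and one must exhibit an explicit pin bundle on which the two candidates take distinct values.
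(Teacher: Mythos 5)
Your argument is correct, but it takes a genuinely different route from the paper's. The paper's proof is a one\--step obstruction\--theoretic identification: since $\pi_2(V_{4,5})=0$ and $\pi_1(\BPIN_5)$ acts trivially on $\pi_3(V_{4,5})\cong\Z$, the class $z$ is the classical primary obstruction in $H^4(\BPIN_5;\pi_3(V_{4,5}))$ to extending the universal pin structure to a $4$-frame of $r^*\gamma$, and reduction modulo $2$ is induced by the coefficient surjection $\pi_3(V_{4,5})\to\pi_3(V_{2,5})\cong\Z_2$ coming from the forgetful map $V_{4,5}\to V_{2,5}$; the resulting class is the obstruction to a $2$-frame, which is $w_4(r^*\gamma)$ by the very definition of the top Stiefel--Whitney obstruction. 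You instead pull back along the double cover $i\co\BSPIN_5\to\BPIN_5$, quote the spin computation $\tilde z\equiv w_4\pmod 2$, bound the ambiguity $\ker i^*\subset\{0,w_1^4\}$ via the Gysin sequence together with $H^3(\BPIN_5;\Z_2)=\langle w_1^3\rangle$, and eliminate the wrong candidate by evaluating on $5L$ over $\R P^4$. Each step checks out; in particular the test computation is legitimate and non-circular, because the fact that $z(\xi)=0$ (together with a pin structure) forces a $4$-frame over a $4$-complex is already available from the transgression description in the preceding lemma, and your closing observation that naturality alone cannot suffice --- since $w_1(r^*\gamma)^4$ is a nonzero element of $\ker i^*$ --- is exactly the right point to flag. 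What you lose is brevity; what you gain is independence from the coefficient-homomorphism identification, an explicit check on $\R P^4$ consistent with the example in \S\ref{secondary}, and, as a byproduct, the fact that $w_1(r^*\gamma)^4\ne 0$ in $H^4(\BPIN_5;\Z_2)$.
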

\begin{proof} The bundle $r^*\gamma$ is the universal pin vector bundle, while
the class $z$ is the obstruction to extending the universal pin
structure of $r^*\gamma$ to a $4$-frame of $r^*\gamma$ over a $4$-skeleton of $\BPIN_5$. Since
$\pi_2(V_{4,5})=0$, there is an extension of the universal
pin structure to a $4$-frame over
a $3$-skeleton of $\BPIN_5$. Furthermore, by \cite[p.\,2]{James}, the action of
$\pi_1(\BPIN_5)$ on $\pi_3(\BPIN_5)$ is trivial. Hence, the class
$z$ is the classical obstruction, $z\in H^4(\BPIN_5;
\pi_3(V_{4,5}))$. Furthermore, the modulo two reduction
homomorphism is associated to the obvious homomorphism
\[
  \pi_3(V_{4,5}) \longrightarrow \pi_3(V_{2,5}).
\]
Hence, it takes the class $z$ to the obstruction to the
existence of a $2$-frame of $r^*\gamma$ over
the $4$-skeleton of $\BPIN_5$, i.e., to the class
$w_4(r^*\gamma)$.
\end{proof}


\end{document}